\documentclass[final,onefignum,onetabnum]{siamart220329}

\usepackage{amsmath,amsfonts,amsopn,amssymb,subeqnarray}
\usepackage{graphicx}
\graphicspath{ {Figures/} }
\usepackage[caption=false]{subfig}
\usepackage{multirow,relsize,makecell}
\usepackage{url}
\usepackage{enumitem}
\ifpdf

\DeclareGraphicsExtensions{.eps,.pdf,.png,.jpg}
\else
  \DeclareGraphicsExtensions{.eps}
\fi
\usepackage{algorithm, algorithmic}

\usepackage{esint}


\numberwithin{theorem}{section}
\newsiamremark{assumption}{Assumption}
\newsiamremark{remark}{Remark}
\newsiamremark{example}{Example}
\crefname{assumption}{Assumption}{Assumptions}
\crefname{remark}{Remark}{Remarks}
\crefname{example}{Example}{Examples}

\headers{Additive Schwarz for the \MakeLowercase{$p$}-Laplacian}{Young-Ju~Lee and Jongho~Park}

\title{On the linear convergence of additive Schwarz methods for the \MakeLowercase{$p$}-Laplacian\thanks{Submitted to arXiv.
\funding{Young-Ju Lee's work was supported by NSF-DMS 2208499, Shapiro Fellowship from Penn State University in the Spring of 2022 and REP grant for the year of 2022, from Texas State University. 
Jongho Park’s work was supported by the
National Research Foundation of Korea~(NRF) grant funded by the Korean government~(MSIT) (No. 2021R1C1C2095193).}
}}

\author{
Young-Ju Lee\thanks{Department of Mathematics, Texas State University, San Marcos, TX 78666, USA
  (\email{yjlee@txstate.edu}).}
\and
Jongho Park\thanks{Applied Mathematics and Computational Sciences Program, Computer, Electrical and Mathematical Science and Engineering Division, King Abdullah University of Science and Technology~(KAUST), Thuwal 23955, Saudi Arabia  (\email{jongho.park@kaist.ac.kr}, \url{https://sites.google.com/view/jonghopark}).}
}

\ifpdf
\hypersetup{
  pdftitle={On the linear convergence of additive Schwarz methods for the p-Laplacian},
  pdfauthor={Young-Ju Lee and Jongho Park}
}
\fi

\newcommand\cT{\mathcal{T}}
\newcommand\tw{\tilde{w}}
\newcommand\up{\underline{p}}
\newcommand\op{\overline{p}}
\newcommand\hp{\hat{p}}

\newcommand\PF{\mathrm{PF}}
\newcommand\QM{\mathrm{QM}}
\newcommand\SD{\mathrm{SD}}

\DeclareMathOperator*{\argmin}{\arg\min}

\begin{document}

\maketitle

\begin{abstract}
We consider additive Schwarz methods for boundary value problems involving the $p$-Laplacian.
While existing theoretical estimates suggest a sublinear convergence rate for these methods, empirical evidence from numerical experiments demonstrates a linear convergence rate.
In this paper, we narrow the gap between these theoretical and empirical results by presenting a novel convergence analysis.
Firstly, we present a new convergence theory for additive Schwarz methods written in terms of a quasi-norm.
This quasi-norm exhibits behavior akin to the Bregman distance of the convex energy functional associated with the problem.
Secondly, we provide a quasi-norm version of the Poincar'{e}--Friedrichs inequality, which plays a crucial role in deriving a quasi-norm stable decomposition for a two-level domain decomposition setting.
By utilizing these key elements, we establish the asymptotic linear convergence of additive Schwarz methods for the $p$-Laplacian.
\end{abstract}

\begin{keywords}
Additive Schwarz method,
$p$-Laplacian,
Linear convergence,
Quasi-norm,
Poincar\'{e}--Friedrichs inequality,
Convergence analysis
\end{keywords}

\begin{AMS}
65N55, 65J15, 65K15
\end{AMS}

\section{Introduction}
\label{Sec:Introduction}
Let $\Omega$ be a bounded polygonal domain in $\mathbb{R}^2$ with the Lipschitz boundary $\partial \Omega$.
Given $p \in (1,\infty)$, we consider the following $p$-Laplace equation: 
\begin{equation}
\begin{split}
\label{pLap_strong}
- \nabla \cdot \left( |\nabla u |^{p-2} \nabla u \right) = f \quad &\text{ in } \Omega, \\
u = 0 \quad &\text{ on } \partial \Omega,
\end{split}
\end{equation}
where $f \in L^{p^*}(\Omega)$ with $p^*$ being from the equation $1/p + 1/p^* = 1$.

The $p$-Laplacian is a standard example of nonlinear elliptic problems~\cite{BGKT:2018}. Furthermore, it has a number of application areas, including glaciology, non-Newtonian fluids~\cite{Shapovalov:2017}, nonlinear diffusion, and nonlinear elasticity; see~\cite{Diaz:1985} and references therein.
Thus, there has been extensive research on~\eqref{pLap_strong}, especially for numerical solutions of~\eqref{pLap_strong}.
Some important early results can be found in~\cite{Ciarlet:2002,GM:1975}.
Finite element methods for the $p$-Laplacian were analyzed in terms of the quasi-norm in~\cite{BL:1993,BL:1994}.
Further studies on error estimates for the $p$-Laplacian in terms of the quasi-norm were conducted in~\cite{CLY:2006,EL:2005,LY:2001,LY:2002}.
Linear convergence of adaptive finite element methods for~\eqref{pLap_strong} was shown in~\cite{DK:2008}.
Numerical homogenization for multiscale $p$-Laplacian problems was investigated in~\cite{LCZ:2021}.

It is well-known that the boundary value problem~\eqref{pLap_strong} can be formulated in the following weak form~\cite{Ciarlet:2002,GM:1975}: find $u \in W_0^{1,p}(\Omega)$ such that 
\begin{equation*} 
\int_{\Omega} |\nabla u|^{p-2} \nabla u \cdot \nabla v \, dx = \int_{\Omega} f \, v \,dx, \quad v \in W^{1,p}_0(\Omega),  
\end{equation*}
where $W_0^{1,p} (\Omega)$ is a usual Sobolev space consisting of the $L^p (\Omega)$-functions vanishing on $\partial \Omega$ with $(L^p (\Omega))^2$-gradient.
Equivalently, it is interpreted as the following convex optimization problem:
\begin{equation}\label{pLap}
\min_{v \in W_0^{1,p} (\Omega)} \left\{ F(v): = \frac{1}{p} \int_{\Omega} |\nabla v|^p \,dx - \int_{\Omega} f v \,dx \right\}. 
\end{equation}
That is, one may deal with the convex optimization problem~\eqref{pLap} to obtain a solution of~\eqref{pLap_strong}.
Based on the convex optimization formulation~\eqref{pLap}, multigrid and preconditioned descent methods were proposed in~\cite{BI:2000} and~\cite{HLL:2007}, respectively. In particular, the framework of subspace correction methods~\cite{Xu:1992} for~\eqref{pLap} were considered in~\cite{Park:2020,TX:2002}.

This paper is concerned with numerical solutions of boundary value problems involving the $p$-Laplacian by additive Schwarz methods.
Additive Schwarz methods, also known as parallel subspace correction methods, have been broadly used as efficient numerical solvers for large-scale scientific problems; see~\cite{TW:2005,Xu:1992} and references therein for relevant results on linear problems.
In additive Schwarz methods, the domain of a target problem is decomposed into a union of several subdomains, and optimal local corrections on the subdomains with respect a numerical approximation for the solution are computed in parallel.
The numerical approximation for the solution is iteratively updated by collecting all the local corrections.
Due to their parallel structures, additive Schwarz methods are suitable for massively parallel computation using distributed memory computers.
In the past decades, there have been a number of results on additive Schwarz methods for large-scale convex optimization problems.
The framework of additive Schwarz methods was first considered for convex optimization in~\cite{TE:1998}, and subsequently applied to the $p$-Laplacian in~\cite{TX:2002}.
These methods have since been further investigated in several studies, including~\cite{Badea:2006,Badea:2019,Park:2020,Park:2022}.

The convergence rate of additive Schwarz methods for the $p$-Laplacian problem~\eqref{pLap_strong} was first analyzed in~\cite{TX:2002}; the  $\mathcal{O} (n^{-\frac{\up(\up-1)}{(\op-\up)(\op+\up-1)}})$ energy convergence of the methods was proven, where $n$ denotes the number of iterations, $\up = \min \{p, 2 \}$, and $\op = \max \{ p, 2 \}$.
Recently,~\cite{Park:2020} showed that the methods satisfy the improved $\mathcal{O} (n^{-\frac{\op(\up-1)}{\op-\up}})$ convergence rate~(see \cref{Prop:Park}). 
The results in both~\cite{TX:2002} and~\cite{Park:2020} are based on some estimates for the Bregman distance of the energy functional $F$ in~\eqref{pLap}.
Roughly speaking, these estimates are written as
\begin{equation}
\label{Bregman_rough}
    \mu_{\op} \| u - v \|_{W^{1,p}(\Omega)}^{\op} \leq D_F (u,v) \leq L_{\up} \| u - v \|_{W^{1,p}(\Omega)}^{\up}, \quad u, v \in W_0^{1,p}(\Omega),
\end{equation}
where $\mu_{\op}$ and $L_{\up}$ are positive constants independent of $u$ and $v$, and $D_F (u,v)$ is the Bregman distance of $F$ defined by
\begin{equation}
\label{Bregman_distance}
    D_F (u,v) = F(u) - F(v) - \left< F'(v), u - v \right>, \quad u, v \in W_0^{1,p}(\Omega).
\end{equation}
Here, $F' (v)$ stands for the Frech\'{e}t derivative of $F$ at $v$ given by
\begin{equation}
\label{Frechet}
    \left< F'(v), w \right> = \int_{\Omega} |\nabla v|^{p-2} \nabla v \cdot \nabla w \,dx - \int_{\Omega} fw \,dx,
    \quad w \in W_0^{1,p} (\Omega).
\end{equation}
One may refer to~\cite[Lemma~2.1]{TX:2002} and~\cite[Section~6.1]{Park:2020} for details on the estimate~\eqref{Bregman_rough}.

While both~\cite{TX:2002} and~\cite{Park:2020} proved the sublinear convergence of additive Schwarz methods for the $p$-Laplacian, it was observed numerically in several works that the methods actually converge linearly; see, e.g.,~\cite[Fig.~2]{Park:2021}.
Indeed, as we will demonstrate in the numerical experiments presented in \cref{Sec:Numerical} of this paper, additive Schwarz methods for~\eqref{pLap_strong} exhibit linear convergence empirically under various settings on discretization and domain decomposition.
More precisely, each convergence curve of the energy error with respect to the number of iterations seems linear in the $x$-linear $y$-log scale plot when the number of iterations is sufficiently large, which means that the energy error decays exponentially as the number of iterations increases.
This implies that the existing convergence estimates for additive Schwarz methods for the $p$-Laplacian may not be optimal.

The main motivation of this paper is to discuss a linear convergence analysis for additive Schwarz methods to solve the $p$-Laplacian problem~\eqref{pLap_strong}.
As we mentioned above, while the existing theoretical estimates~\cite{Park:2020,TX:2002} for the convergence rate of additive Schwarz methods for the $p$-Laplacian are sublinear, the empirical convergence rate observed by numerical experiments is linear.
This discrepancy between theoretical and empirical results motivates our work, as we aim to bridge this gap by rigorously proving the asymptotic linear convergence of additive Schwarz methods for the $p$-Laplacian.

In~\eqref{Bregman_rough}, $\up$ and $\op$ do not agree if $p \neq 2$, so that the lower and upper bounds for $D_F (u,v)$ are expressed in powers of $\| u - v \|_{W^{1,p}(\Omega)}$ with different exponents.
This discrepancy indicates that a power of norm is not adequate as a tight two-sided approximation for the Bregman distance; whenever we establish a bound for $D_F (u,v)$ in terms of $\| u - v \|_{W^{1,p}(\Omega)}$ or vice versa, we suffer from a kind of looseness.
We claim that the sublinear convergence rates given in the existing works~\cite{Park:2020,TX:2002} are caused by this looseness.
To overcome this issue, we propose to use the quasi-norm developed in~\cite{CLY:2006,EL:2005,LY:2001,LY:2002}, which is relevant to the problem of consideration and approximates the Bregman distance appropriately, and then to derive the convergence estimate in terms of the quasi-norm.
This approach is similar to obtain the convergence measure of the iterative method using the energy-like metric relevant to the problem to be solved, as discussed in~\cite{LWXZ:2008,LWC:2009}.
We denote the quasi-norm by $\| \cdot \|_{(\nabla v)}$~(see~\eqref{quasi_norm}) and show that
\begin{equation}
    \label{Bregman}
    \mu_p \| u-v \|_{(\nabla v)}^2 \leq D_F (u,v) \leq L_p \| u - v \|_{(\nabla v)}^2, \quad u, v \in W_0^{1,p}(\Omega)
\end{equation}
for some positive constants $\mu_p$ and $L_p$~(see \cref{Lem:Bregman}), i.e., $\| u - v \|_{(\nabla v)}^2$ approximates $D_F (u,v)$ well up to a multiplicative constant.
Meanwhile, we note that the quasi-norm $\| \cdot \|_{(\nabla v)}$, along with several alternative versions described in~\cite{DK:2008,DR:2007}, do not induce a norm.
As a result, existing convergence theories for additive Schwarz methods~\cite{Park:2020,TX:2002} cannot directly utilize the estimate~\eqref{Bregman}.
A novelty in this paper is that, by extending the idea of ~\cite{Park:2020}, a new convergence theory for additive Schwarz methods is obtained in terms of the quasi-norm, which utilizes~\eqref{Bregman} to obtain the asymptotic linear convergence rate of additive Schwarz methods for the $p$-Laplacian.
In our linear convergence analysis, a quasi-norm version of the Poincar\'{e}--Friedrichs inequality~(see \cref{Lem:Poincare_greater,Lem:Poincare_less}) plays a critical role.
We validate this asymptotic linear convergence result numerically in \cref{Sec:Numerical}.

The rest of this paper is organized as follows.
In \cref{Sec:ASM}, we present finite element approximations, domain decomposition settings, and a two-level additive Schwarz method for the $p$-Laplacian problem.
An asymptotic linear convergence analysis of the two-level additive Schwarz method is given in \cref{Sec:Convergence}.
In \cref{Sec:Poincare}, we present details of the quasi-norm Poincar\'{e}--Friedrichs inequality that is used in the convergence analysis of the methods.
In \cref{Sec:Numerical}, we provide numerical results of the two-level additive Schwarz method for the $p$-Laplacian problem across various settings.
Finally, we provide a concluding remark for our paper in \cref{Sec:Conclusion}.

\section{Additive Schwarz methods}
\label{Sec:ASM}
In this section, we introduce finite element spaces and domain decomposition settings for the $p$-Laplacian problem~\eqref{pLap}.
Based on these settings, we present a two-level additive Schwarz method for~\eqref{pLap} and its convergence theory, which explains the asymptotic linear convergence of the algorithm.

In what follows, the notation $A \lesssim B$ means that there exists a constant $c > 0$ such that $A \leq c B$, where $c$ is independent of the geometric parameters $H$, $h$, and $\delta$ relying on discretization and domain decomposition.
We also write $A \approx B$ if $A \lesssim B$ and $B \lesssim A$.

\subsection{Discretization and domain decomposition}
Let $\cT_h$ be a quasi-uniform triangulation of $\Omega$ with $h$ the characteristic element diameter.
The collection of continuous and piecewise linear functions on $\cT_h$ vanishing on $\partial \Omega$ is denoted by $V = S_h (\Omega)$.
Clearly, we have $V \subset W_0^{1, \infty} (\Omega)$.
For continuous functions, the nodal interpolation operator $I_h$ onto $S_h (\Omega)$ is well-defined.

In what follows, we consider the following conforming finite element approximation of~\eqref{pLap} defined on $V$:
\begin{equation}
    \label{pLap_FEM}
    \min_{u \in V} F(u).
\end{equation}
A unique solution of~\eqref{pLap_FEM} is denoted by $u^* \in V$.
Convergence properties of~\eqref{pLap_FEM} as $h \rightarrow 0$ can be found in~\cite{BL:1993,Ciarlet:2002}.

Next, we describe domain decomposition settings for the problem~\eqref{pLap_FEM}.
We assume that $\Omega$ admits another quasi-uniform triangulation $\cT_H$ with $H$ the characteristic element diameter such that $\cT_h$ is a refinement of $\cT_H$.
A finite element space $S_H (\Omega)$ is defined in the same manner as $S_h (\Omega)$.
In the two-level additive Schwarz method for~\eqref{pLap_FEM}, $\cT_h$ and $\cT_H$ will play roles of fine and coarse meshes, respectively.
Let $\{ \Omega_k \}_{k=1}^N$ be a nonoverlapping domain decomposition of $\Omega$ such that each $\Omega_k$ is the union of several coarse elements in $\cT_H$ and the number of coarse elements consisting of $\Omega_k$ is uniformly bounded.
For each subdomain $\Omega_k$, $1 \leq k \leq N$, we consider an enlarged region $\Omega_k'$ consisting of the elements $T \in \cT_h$ with $\operatorname{dist} (T, \Omega_k) \leq \delta$.
Then $\{ \Omega_k'\}_{k=1}^N$ forms an overlapping domain decomposition of $\Omega$.
We define $S_h (\Omega_k ') \subset W_0^{1, \infty}(\Omega_k')$ as the piecewise linear finite element space on the $\cT_h|_{\Omega_k'}$ with the homogeneous essential boundary condition.

We set
\begin{equation*}
V_0 = S_H (\Omega), \quad
V_k = S_h (\Omega_k'), \quad 1 \leq k \leq N.
\end{equation*}
A two-level domain decomposition for $V$ is given by
\begin{equation}
\label{2L}
V = \sum_{k=0}^N R_k^* V_k,
\end{equation}
where $R_k^*$:~$V_k \rightarrow V$, $1 \leq k \leq N$, is the natural extension-by-zero operator and $R_0^*$:~$V_0 \rightarrow V$ is the natural interpolation operator.
Let $\{ \theta_k \}_{k=1}^N$ be the piecewise linear partition of unity for $\Omega$ subordinate to the covering $\{ \Omega_k' \}_{k=1}^N$ that was presented in~\cite[Eq.~(3.7)]{TW:2005}.
It is known that $\{ \theta_k \}_{k=1}^N$ satisfies the following properties:
\begin{subequations}
\label{pou}
\begin{align}
\theta_k = 0 \quad \text{ on } \Omega \setminus \Omega_k',& \label{pou1} \\
\sum_{k=1}^N \theta_k = 1 \quad \text{ on } \overline{\Omega}, \label{pou2}& \\
\| \nabla \theta_k \|_{L^{\infty} (\Omega_k')} \lesssim \frac{1}{ \delta}, \quad 1 \leq k \leq N.& \label{pou3}
\end{align}
\end{subequations}
The following lemma summarizes an important result on stable decomposition for the two-level domain decomposition~\eqref{2L}~(see~\cite[Lemma~4.1]{TX:2002}).

\begin{lemma}
\label{Lem:2L}
For $w \in V$, let $w_0 \in V_0$ be the $L^2 (\Omega)$-orthogonal projection of $w$ onto $V_0$ and let $w_k \in V_k$, $1 \leq k \leq N$, such that
\begin{equation*}
    R_k^* w_k = I_h (\theta_k (w - R_0^* w_0)).
\end{equation*}
For $s \geq 1$, we have $w = \sum_{k=0}^N R_k^* w_k$ and
\begin{equation*}
    \sum_{k=0}^N | R_k^* w_k |_{W^{1,s} (\Omega)}^s \lesssim \left( 1 + \left( \frac{H}{\delta} \right)^{s-1} \right) | w |_{W^{1,s} (\Omega)}^s.
\end{equation*}
\end{lemma}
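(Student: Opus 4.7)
The plan is to follow the classical stable-decomposition argument adapted to $L^s$ norms. First I would verify the decomposition identity $w = \sumk R_k^* w_k$: since $\{\theta_k\}_{k=1}^N$ is a partition of unity and $w - R_0^* w_0 \in V = S_h(\Omega)$ is continuous piecewise linear (hence fixed by the nodal interpolant $I_h$), linearity of $I_h$ gives
\[
\sum_{k=1}^N I_h(\theta_k(w - R_0^* w_0)) = I_h\!\left(\left(\sum_{k=1}^N \theta_k\right)(w - R_0^* w_0)\right) = I_h(w - R_0^* w_0) = w - R_0^* w_0,
\]
and adding $R_0^* w_0$ completes the identity. The coarse term is controlled by the $W^{1,s}$-stability of the $L^2(\Omega)$-projection onto $S_H(\Omega)$ on a quasi-uniform coarse mesh, which yields both $|R_0^* w_0|_{W^{1,s}(\Omega)} \lesssim |w|_{W^{1,s}(\Omega)}$ and the approximation estimate $\|w - R_0^* w_0\|_{L^s(\Omega)} \lesssim H |w|_{W^{1,s}(\Omega)}$ that feeds into the local estimates.

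For the local terms I would set $v := w - R_0^* w_0$ and exploit the partition-of-unity bounds $\|\theta_k\|_{L^\infty} \leq 1$ and $\|\nabla \theta_k\|_{L^\infty} \lesssim \delta^{-1}$ in combination with an elementwise Leibniz-plus-interpolation estimate for the nodal interpolant of the product $\theta_k v$. This produces, for each $1 \leq k \leq N$,
\[
|R_k^* w_k|_{W^{1,s}(\Omega)}^s \lesssim \delta^{-s} \|v\|_{L^s(\Omega_{k,\delta})}^s + |v|_{W^{1,s}(\Omega_k')}^s,
\]
where $\Omega_{k,\delta}$ denotes the thin strip of width $O(\delta)$ on which $\theta_k$ is not constant.

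Summing over $k$ and invoking the bounded overlap of $\{\Omega_k'\}$ reduces the second term to a constant multiple of $|v|_{W^{1,s}(\Omega)}^s \lesssim |w|_{W^{1,s}(\Omega)}^s$. The contribution $\sum_k \|v\|_{L^s(\Omega_{k,\delta})}^s$ needs a boundary-strip estimate that combines the coarse approximation $\|v\|_{L^s(\Omega)} \lesssim H|w|_{W^{1,s}(\Omega)}$ with the fact that each strip $\Omega_{k,\delta}$ occupies only a fraction $\sim \delta/H$ of $\Omega_k'$, yielding roughly $\sum_k \|v\|_{L^s(\Omega_{k,\delta})}^s \lesssim \delta H^{s-1}|w|_{W^{1,s}(\Omega)}^s$. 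Combining the pieces gives $\sum_k |R_k^* w_k|_{W^{1,s}(\Omega)}^s \lesssim (H/\delta)^{s-1}|w|_{W^{1,s}(\Omega)}^s$, from which the stated bound follows by taking $s$-th roots together with a discrete H\"older/finite-overlap step to convert the $\ell^s$ quantity to the linear sum $\sumk$.

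The main obstacle is precisely the fractional exponent $(s-1)/s$: the naive product-rule bound together with the global estimate $\|v\|_{L^s(\Omega)} \lesssim H|w|_{W^{1,s}(\Omega)}$ applied on the full subdomain $\Omega_k'$ would yield the suboptimal scaling $H/\delta$, so the proof must interpolate carefully between the $L^s$ control on the $\delta$-strip and the $W^{1,s}$ control on the whole subdomain, and verify that the passage from the $s$-th-power-summed bound to the linear sum does not introduce a factor depending on the total number of subdomains. This is the delicate piece where the quasi-uniformity of $\cT_H$ and the geometric structure of the partition of unity combine to produce the sharp exponent.
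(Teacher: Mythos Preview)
The paper does not prove this lemma; it simply cites \cite[Lemma~4.1]{TX:2002}. Your outline is precisely the standard argument carried out there: the decomposition identity via linearity of $I_h$, $W^{1,s}$-stability and approximation for the coarse $L^2$-projection, the elementwise product-rule/interpolation estimate for $I_h(\theta_k v)$, and the $L^s$ strip lemma on the overlap regions where $\nabla\theta_k \neq 0$ to recover the sharp exponent.

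Your concern in the last paragraph about converting the $\ell^s$ bound to the linear sum is well-placed but is an artefact of the statement rather than a gap in the method. The linear sum as literally written would indeed pick up a factor $N^{1-1/s}$ via H\"older; what Tai--Xu actually prove, and what the proof of \cref{Thm:stable} in this paper actually invokes, is the $\ell^s$ form
\[
\sumk |R_k^* w_k|_{W^{1,s}(\Omega)}^s \lesssim \left(\frac{H}{\delta}\right)^{s-1} |w|_{W^{1,s}(\Omega)}^s,
\]
which your argument delivers directly with no $N$-dependent conversion. So you may stop at the $s$-th-power-summed bound.
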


Using the usual coloring technique, one can prove that the two-level domain decomposition~\eqref{2L} enjoys the strengthened convexity condition~(see~\cite[Assumption~4.2]{Park:2020}).

\begin{lemma}
\label{Lem:convex}
Let $N_c$ be the minimum number of colors such that $\{ \Omega_k' \}_{k=1}^N$ is colored in a way that the subdomains with the same color do not intersect with each other, and let $\tau_0 = 1/(N_c + 1)$.
For any $v \in V$, $w_k \in V_k$, $0 \leq k \leq N$, and $\tau \in (0, \tau_0]$, we have
\begin{equation*}
    (1 - \tau (N+1)) F(v) + \tau \sum_{k=0}^N F(v + R_k^* w_k ) \geq F \left( v + \tau \sum_{k=0}^N R_k^* w_k \right).
\end{equation*}
\end{lemma}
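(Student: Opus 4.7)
The plan is to reduce the strengthened convexity inequality---which involves the $N+2$ terms $F(v)$ and $\{F(v + R_k^* w_k)\}_{k=0}^N$---to a genuine convex-combination inequality with only $N_c + 2$ terms by grouping the local corrections according to the coloring. First, I would fix a coloring of $\{\Omega_k'\}_{k=1}^N$ by $N_c$ colors and let $J_c \subset \{1, \ldots, N\}$ be the set of indices of color $c$, so that $\{\Omega_k'\}_{k \in J_c}$ are pairwise disjoint and hence the supports of $\{R_k^* w_k\}_{k \in J_c}$ are pairwise disjoint. Setting $z_c = \sum_{k \in J_c} R_k^* w_k$ for $1 \le c \le N_c$ and $z_0 = R_0^* w_0$, one then has $\sum_{c=0}^{N_c} z_c = \sumk R_k^* w_k$.

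The second step is a local decomposition of $F$. Since $F(u) = \frac{1}{p} \intO |\nabla u|^p \, dx - \intO fu \, dx$ is an integral over $\Omega$, and since within a single color class the supports of $\{R_k^* w_k\}_{k \in J_c}$ are pairwise disjoint, splitting each integral over the $\Omega_k'$'s and their complement yields
\begin{equation*}
F(v + z_c) = F(v) + \sum_{k \in J_c} \left[F(v + R_k^* w_k) - F(v)\right], \quad 1 \leq c \leq N_c.
\end{equation*}
Summing these identities over $c$ and including the coarse contribution $F(v + R_0^* w_0) = F(v + z_0)$ rewrites $\sumk F(v + R_k^* w_k)$ as $\sum_{c=0}^{N_c} F(v + z_c) + (N - N_c) F(v)$. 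Substituting this into the left-hand side of the target inequality cancels $N - N_c$ copies of $F(v)$ and reduces the claim to
\begin{equation*}
(1 - \tau (N_c + 1)) F(v) + \tau \sum_{c=0}^{N_c} F(v + z_c) \geq F\Bigl(v + \tau \sum_{c=0}^{N_c} z_c\Bigr).
\end{equation*}

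Finally, since $\tau \le \tau_0 = 1/(N_c + 1)$, the weights $(1 - \tau(N_c + 1), \tau, \ldots, \tau)$ are nonnegative and sum to one, so the argument on the right is a convex combination of the $N_c + 2$ points $v, v + z_0, \ldots, v + z_{N_c}$; the inequality then follows immediately from convexity of $F$ on $V$.

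I expect the main obstacle to be the careful justification of the local decomposition in the second step. The linear part $-\intO fu \, dx$ is additive in an obvious way, but for the nonlinear $p$-energy one must observe that on each $\Omega_k'$ with $k \in J_c$ one has $\nabla(v + z_c) = \nabla(v + R_k^* w_k)$, while $\nabla(v + z_c) = \nabla v$ outside $\bigcup_{k \in J_c} \overline{\Omega_k'}$; the pairwise disjointness of the $\Omega_k'$ in each color class then lets $\intO |\nabla(v + z_c)|^p \, dx$ split cleanly into the desired pieces. Once this observation is in hand, the remainder of the argument is a routine coloring trick combined with Jensen's inequality.
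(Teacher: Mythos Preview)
Your proposal is correct and is precisely the standard coloring argument that the paper defers to by citing \cite[Section~5.1]{Park:2020}: group the fine subdomains by color so that same-color corrections have disjoint supports, use this disjointness to rewrite $\sumk F(v+R_k^* w_k)$ as $\sum_{c=0}^{N_c} F(v+z_c) + (N-N_c)F(v)$, and then apply convexity of $F$ to the resulting genuine convex combination with nonnegative weights $(1-\tau(N_c+1),\tau,\ldots,\tau)$. Your anticipated obstacle in the second step is indeed the only point requiring care, and your sketch of its resolution (splitting the integral over the pairwise disjoint $\Omega_k'$ and their complement) is exactly right.
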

\begin{proof}
See~\cite[Section~5.1]{Park:2020}.
For suitable overlaps, we have $N_c = 4$~\cite{TX:2002}.
\end{proof}

\subsection{Two-level additive Schwarz method}
The two-level additive Schwarz method for~\eqref{pLap_FEM} based on the space decomposition~\eqref{2L} is described in \cref{Alg:ASM}.
It is worth noting that this algorithm has been investigated in several prior works.
The algorithm for smooth convex optimization was first considered in~\cite{TE:1998}, and then applied to the $p$-Laplacian in~\cite{TX:2002}.
Later, the framework was generalized to constrained and nonsmooth convex optimization problems in~\cite{Badea:2006} and~\cite{Park:2020,Park:2021}, respectively.
The constant $\tau_0$ in \cref{Alg:ASM} was given in \cref{Lem:convex}.

\begin{algorithm}
\caption{Two-level additive Schwarz method for~\eqref{pLap_FEM}}
\label{Alg:ASM}
\begin{algorithmic}[0]
\STATE Let $u^{(0)} \in V$ and $\tau \in (0, \tau_0]$.
\FOR{$n=0,1,2,\dots$}
\STATE \begin{equation*}
\begin{split}
w_k^{(n+1)} &= \argmin_{w_k \in V_k} F(u^{(n)} + R_k^* w_k ), \quad 0 \leq k \leq N \\
u^{(n+1)} &= u^{(n)} + \tau \sum_{k=0}^N R_k^* w_k^{(n+1)}
\end{split}
\end{equation*}
\ENDFOR
\end{algorithmic}
\end{algorithm} 

The following proposition summarizes the sublinear convergence rate of \cref{Alg:ASM} analyzed in~\cite[Theorem~6.1]{Park:2020}.
It was discussed in~\cite[Remark~4.2]{Park:2021} that the rate presented in \cref{Prop:Park} is the sharpest estimate among the existing ones~\cite{Badea:2006,BK:2012,Park:2020,TX:2002}.

\begin{proposition}
\label{Prop:Park}
In \cref{Alg:ASM}, we write $\zeta_n = F (u^{(n)}) - F(u^*)$ for $n \geq 0$.
There exist positive constants $\zeta^*$ and $c^*$, depending on $u^{(0)}$, $\tau$, and $H/\delta$, such that
\begin{equation*}
    \zeta_{n+1} \leq
    \begin{cases}
        \displaystyle \left(1 - \tau \left( 1 - \frac{1}{\up} \right) \right) \zeta_n, & \quad \zeta_n \geq \zeta^*, \\
        \displaystyle \zeta_n - c^* \zeta_n^{\frac{\up (\op - 1)}{\op (\up - 1)}}, & \quad \zeta_n < \zeta^*,
    \end{cases}
\end{equation*}
where $\up = \min \{ p, 2\}$ and $\op = \max \{p, 2 \}$.
Consequently, we have
\begin{equation*}
    \zeta_n \lesssim \frac{1}{(c^* (n+1))^{\frac{\op (\up - 1)}{\op - \up}}}
\end{equation*}
for sufficiently large $n \geq 0$.
\end{proposition}
\begin{proof}
    See~\cite[Section~A.4]{Park:2020}.
\end{proof}

While \cref{Prop:Park} ensures the sublinear convergence of \cref{Alg:ASM}, as we will see in \cref{Sec:Numerical},  the actual numerical behavior indicates linear convergence.
This observation motivates us to develop a new convergence theory for \cref{Alg:ASM} that can explain the linear convergence.
We summarize our main result, the asymptotic linear convergence of \cref{Alg:ASM}, in \cref{Thm:linear}.
The proof of \cref{Alg:ASM} will be provided in \cref{Sec:Convergence}.
We highlight that \cref{Thm:linear} stands as the first theoretical result that explains the linear convergence of the additive Schwarz method for the $p$-Laplacian.

\begin{theorem}
\label{Thm:linear}
If the solution $u^* \in V$ of~\eqref{pLap_FEM} satisfies that $| \nabla u^* |$ does not vanish on $\Omega$, then, in \cref{Alg:ASM}, we have
\begin{equation*}
\limsup_{n \rightarrow \infty} \frac{F(u^{(n+1)}) - F(u^*)}{F(u^{(n)}) - F(u^*)} \leq 1 - \gamma^{-1},
\end{equation*}
where $\gamma$ is a positive constant depending on $p$, $u^*$, $H$, $\delta$, and $\tau$ such that
\begin{equation*}
\gamma \lesssim \left[ 1 + \overline{C}_{p,u^*}^{\PF} \left( \frac{H}{\delta} \right)^p \right]^{\frac{1}{\min \{p, 2 \} - 1}},
\end{equation*}
and the constant $\overline{C}_{p,u^*}^{\PF}$ is given in either \cref{Lem:Poincare_greater} or \cref{Lem:Poincare_less}.
\end{theorem}

Regarding the condition in \cref{Thm:linear} requires a condition that the finite element solution $u^*$ should satisfy $| \nabla u^* | \neq 0$ on $\Omega$, we discuss its validity under extreme values of $p$, particularly when $p$ is either very large or close to $1$.
As we will demonstrate in \cref{Sec:Numerical}, for large $p$, the solution may develop a singularity~(see \cref{Fig:sol}(e, f)).
Fortunately, this singularity does not violate the condition $| \nabla u^* | \neq 0$.
However, when $p$ close to $1$, the solution may exhibit a flat region, potentially leading to a vanishing gradient~(see \cref{Fig:sol}(a, b)).
Consequently, the applicability of \cref{Thm:linear} to cases near $p = 1$ may be limited.

Despite the potential limitations in applying \cref{Thm:linear} to such cases, it remains practically relevant, as many real-world applications involving the $p$-Laplacian typically utilize moderate values of $p$.
For instance, in modeling nonlinear Darcy law for fluid flows, as discussed in~\cite{BGKT:2018}, physically meaningful values for $p$ are generally greater than $3/2$.

We conclude this section by mentioning several acceleration methodologies that can be applied to \cref{Alg:ASM}.
In~\cite{Park:2021,Park:2022}, acceleration schemes for additive Schwarz methods for convex optimization were proposed.
As the energy functional $F$ is convex, these schemes can be directly applied to \cref{Alg:ASM} to yield accelerated variants.
These accelerated methods show faster convergence behaviors than the vanilla method, while they have essentially the same computational cost per iteration; see~\cite{Park:2022} for relevant numerical results.
We do not deal with the accelerated methods in detail because they are beyond the scope of this paper.

\section{Convergence analysis}
\label{Sec:Convergence}
The main objective of this section is to prove \cref{Thm:linear}, which is the asymptotic linear convergence theorem for the two-level additive Schwarz method for the $p$-Laplacian.
We begin by presenting some useful properties of the quasi-norm $\| \cdot \|_{(\nabla v)}$~\cite{EL:2005,LY:2001}, which is defined as
\begin{equation}
\label{quasi_norm}
    \| w \|_{(\nabla v)}^2 = \int_{\Omega} \left( | \nabla w | + | \nabla v | \right)^{p-2} |\nabla w|^2 \,dx, \quad v, w \in W^{1,p} (\Omega).
\end{equation}
Subsequently, we prove \cref{Thm:linear} by verifying a certain quasi-norm stable decomposition property~\cite{Park:2020,TX:2002}.

\subsection{Properties of the quasi-norm}
The quasi-norm $\| \cdot \|_{(\nabla v)}$ given in~\eqref{quasi_norm} satisfies a scaling property in the sense that the $\| tw \|_{(\nabla v)}$ is bounded by $\| w \|_{(\nabla v)}$ multiplied by $t^\alpha$ for some $\alpha \in \mathbb{R}$, where $v,w \in W^{1,p} (\Omega)$ and $t \in [0,1]$.
\Cref{Lem:scaling} summarizes such a property.

\begin{lemma}
\label{Lem:scaling}
For any $v,w \in W^{1,p} (\Omega)$ and $t \in [0, 1]$, we have
\begin{equation*}
    t^{\max \{p, 2\} } \| w \|_{(\nabla v)}^2
    \leq \| tw \|_{(\nabla v)}^2
    \leq t^{\min \{p, 2\} } \| w \|_{(\nabla v)}^2.
\end{equation*}
\end{lemma}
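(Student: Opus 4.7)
The plan is to reduce the inequality to a pointwise one inside the integrand and then handle the two cases $p \geq 2$ and $1 < p < 2$ separately, using monotonicity of $x \mapsto x^{p-2}$.

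First, I would substitute $u = v + tw$ into the definition~\cref{Phi} to get
\begin{equation*}
    \Phi(v+tw, v) = \int_\Omega \bigl(t|\nabla w| + |\nabla v|\bigr)^{p-2} \, t^2 \, |\nabla w|^2 \, dx,
\end{equation*}
and write $\Phi(v+w,v)$ similarly with $t=1$. Setting $a = |\nabla w(x)|$ and $b = |\nabla v(x)|$, the claim reduces to the pointwise two-sided inequality
\begin{equation*}
    t^{\max\{p,2\}} (a+b)^{p-2} \,\leq\, t^2 (ta+b)^{p-2} \,\leq\, t^{\min\{p,2\}} (a+b)^{p-2}
\end{equation*}
for all $a,b \geq 0$ and $t \in [0,1]$, after which integrating against $|\nabla w|^2\,dx$ finishes the proof.

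The key structural fact is the elementary chain
\begin{equation*}
    t(a+b) \,\leq\, ta + b \,\leq\, a + b \qquad (t \in [0,1], \ a,b \geq 0),
\end{equation*}
since $tb \leq b$. I would use this together with the monotonicity of $x \mapsto x^{p-2}$, which increases for $p \geq 2$ and decreases for $p < 2$. In the case $p \geq 2$ (so $\max\{p,2\} = p$, $\min\{p,2\} = 2$), the upper bound becomes $t^2(ta+b)^{p-2} \leq t^2 (a+b)^{p-2}$, which follows immediately from $ta+b \leq a+b$ and $p-2 \geq 0$; the lower bound rewrites as $(t(a+b))^{p-2} \leq (ta+b)^{p-2}$, which follows from $t(a+b) \leq ta+b$ and again $p-2 \geq 0$. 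In the case $1 < p < 2$ (so $\max\{p,2\} = 2$, $\min\{p,2\} = p$), both inequalities are obtained from exactly the same two chain relations, but with the direction of $x \mapsto x^{p-2}$ reversed.

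No step looks difficult; the main thing to get right is keeping track of whether $p-2$ is positive or negative so that the right endpoint of the elementary chain yields the correct sign of the inequality. Once the pointwise bounds are established, the result follows by integration over $\Omega$.
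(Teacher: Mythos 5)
Your proof is correct and follows essentially the same route as the paper's: both reduce to the pointwise chain $t(a+b) \le ta+b \le a+b$ and exploit the monotonicity of $x \mapsto x^{p-2}$, splitting into the cases $p \ge 2$ and $1 < p < 2$. The only cosmetic difference is that you state the pointwise inequality explicitly before integrating, while the paper manipulates the integrals directly; the content is identical.
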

\begin{proof}
Suppose that $p \in [2, \infty)$.
Since the map $x \mapsto x^{p-2}$~($x \geq 0$) is increasing, we get
\begin{align*}
    \| tw \|_{(\nabla v)}^2
    &\leq \int_{\Omega} \left( |\nabla w| + |\nabla v| \right)^{p-2} |t \nabla w|^2 \,dx
    = t^2 \| w \|_{(\nabla v)}^2, \\
    \| tw \|_{(\nabla v)}^2
    &\geq \int_{\Omega} \left( t |\nabla w| + t |\nabla v| \right)^{p-2} |t \nabla w|^2 \,dx
    = t^p \| w \|_{(\nabla v)}^2.
\end{align*}
The case $p \in (1, 2)$ can be shown by a similar argument using the fact that the map $x \mapsto x^{p-2}$~($x \geq 0$) is decreasing.
\end{proof}
 
The following lemma states that $\| u - v \|_{(\nabla v)}$ is bounded by $\| u - v \|_{(\nabla u)}$ up to a multiplicative constant independent of $u,v \in W^{1,p} (\Omega)$.

\begin{lemma}
\label{Lem:symmetry}
For any $u,v \in W^{1,p} (\Omega)$, we have
\begin{equation*}
    \| u - v \|_{(\nabla v)}^2 \leq 2^{|p-2|} \| u - v \|_{(\nabla u)}^2.
\end{equation*}
\end{lemma}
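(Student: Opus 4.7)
The plan is to start from the integrand-level comparison. Writing $\Phi(u,v)$ and $\Phi(v,u)$ from the definition~\cref{Phi}, both integrands share the common factor $|\nabla(u-v)|^2 = |\nabla(v-u)|^2$. The only difference lies in the base of the $(p-2)$-th power: $|\nabla(u-v)| + |\nabla v|$ for $\Phi(u,v)$ versus $|\nabla(u-v)| + |\nabla u|$ for $\Phi(v,u)$. So the task reduces to bounding one of these bases by a constant multiple of the other, pointwise almost everywhere.

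The key observation is a triangle-inequality argument: since $|\nabla v| \leq |\nabla(u-v)| + |\nabla u|$, one immediately gets
\begin{equation*}
    |\nabla(u-v)| + |\nabla v| \leq 2\bigl( |\nabla(u-v)| + |\nabla u| \bigr),
\end{equation*}
and symmetrically with the roles of $u$ and $v$ swapped. This means the two bases agree up to the multiplicative constant $2$.

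Next I would split into two cases according to the sign of $p-2$, exactly as in the proof of \cref{Lem:scaling}. When $p \geq 2$, the map $a \mapsto a^{p-2}$ is nondecreasing, so raising the inequality above to the $(p-2)$-th power gives $(|\nabla(u-v)|+|\nabla v|)^{p-2} \leq 2^{p-2}(|\nabla(u-v)|+|\nabla u|)^{p-2}$. When $p < 2$, the map $a \mapsto a^{p-2}$ is nonincreasing, so I instead apply the reverse inequality $|\nabla(u-v)|+|\nabla u| \leq 2(|\nabla(u-v)|+|\nabla v|)$ and raise it to the (negative) power $p-2$, which flips it and produces the factor $2^{-(p-2)} = 2^{2-p}$. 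Either way the constant is $2^{|p-2|}$.

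Multiplying by the common factor $|\nabla(u-v)|^2$ and integrating over $\Omega$ then yields $\Phi(u,v) \leq 2^{|p-2|}\Phi(v,u)$. The argument is routine; there is no real obstacle here, and the lemma should be viewed as a companion to \cref{Lem:scaling} that quantifies the controlled asymmetry of $\Phi$, which will later be needed whenever the roles of the two arguments of $\Phi$ must be exchanged in the convergence analysis.
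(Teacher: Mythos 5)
Your proof is correct and follows essentially the same route as the paper: the paper's quoted vector inequality $|\xi+\eta|+|\xi| \leq 2(|\xi+\eta|+|\eta|)$ is exactly your triangle-inequality bound on the bases, and the case split on the sign of $p-2$ is what the paper compresses into the single factor $2^{|p-2|}$ before integrating against the common factor $|\nabla(u-v)|^2$.
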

\begin{proof}
Invoking the vector inequality
\begin{equation*}
    |\xi + \eta| + |\xi| \leq 2 \left( |\xi + \eta| + |\eta| \right), \quad \xi, \eta \in \mathbb{R}^2,
\end{equation*}
we get
\begin{multline*}
    \| u - v \|_{(\nabla v)}^2 = \int_{\Omega} \left( |\nabla (u-v)| + |\nabla v| \right)^{p-2} |\nabla (u-v)|^2 \,dx \\
    \leq 2^{|p-2|} \int_{\Omega} \left( |\nabla (v-u)| + |\nabla u| \right)^{p-2} |\nabla (v-u)|^2 \,dx = 2^{|p-2|} \| u - v \|_{(\nabla u)}^2,
\end{multline*}
which completes the proof.
\end{proof}

In~\cite{BL:1993,BL:1994}, the following vector inequalities were established: there exist two positive constants $C_1$ and $C_2$ such that, for any $\xi, \eta \in \mathbb{R}^2$, the following hold:
\begin{subequations}
\label{BL}
\begin{align}
\label{BL_upper}
\left| |\xi|^{p-2} \xi - |\eta|^{p-2} \eta \right| &\leq C_1 |\xi - \eta| \left( |\xi| + |\eta| \right)^{p-2},
\\
\label{BL_lower}
\left(|\xi|^{p-2} \xi - |\eta|^{p-2} \eta \right) \cdot \left ( \xi - \eta \right) &\geq C_2 |\xi - \eta|^{2} \left( |\xi| + |\eta| \right)^{p-2}.
\end{align}
\end{subequations}
Using~\eqref{BL} and proceeding similarly to~\cite[Theorem~2.1]{BL:1993}, we prove \cref{Lem:Bregman}, which says that the estimate~\eqref{Bregman} actually holds.
\Cref{Lem:Bregman} will play an important role in proving~\eqref{Lem:stable}; see also~\cite[Lemma~2.3]{LCZ:2021}.

\begin{lemma}
\label{Lem:Bregman}
There exists positive constants $\mu_p$ and $L_p$ depending on $p$ such that, for any $u,v \in W^{1,p} (\Omega)$, we have
\begin{equation*} 
\mu_p \| u - v \|_{(\nabla v)}^2 \leq D_F (u,v) \leq L_p \| u - v \|_{(\nabla v)}^2. 
\end{equation*} 
\end{lemma}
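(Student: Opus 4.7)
The plan is to reduce the claim to a pointwise inequality on gradients. Writing $G(\xi)=\frac{1}{p}|\xi|^p$ so that $F(v) = \intO G(\nabla v)\,dx - \intO fv\,dx$, the affine term cancels in the Bregman distance and
\begin{equation*}
D_F(u,v) = \intO D_G(\nabla u,\nabla v)\,dx, \quad D_G(\xi,\eta) = G(\xi) - G(\eta) - \nabla G(\eta)\cdot(\xi-\eta).
\end{equation*}
Hence it suffices to prove the pointwise two-sided bound
\begin{equation*}
\mu_{\Phi}(|\xi-\eta|+|\eta|)^{p-2}|\xi-\eta|^2 \leq D_G(\xi,\eta) \leq L_{\Phi}(|\xi-\eta|+|\eta|)^{p-2}|\xi-\eta|^2
\end{equation*}
for every $\xi,\eta\in\Reals{d}$ with constants depending only on $p$; integration over $\Omega$ then yields the lemma, and $h$-independence is automatic since the pointwise estimate sees nothing of the mesh.

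To obtain the pointwise bound I would use the fundamental-theorem-of-calculus representation
\begin{equation*}
D_G(\xi,\eta) = \int_0^1 \left[\nabla G(\eta+t(\xi-\eta)) - \nabla G(\eta)\right]\cdot(\xi-\eta)\,dt.
\end{equation*}
Applying~\cref{BL_upper} and~\cref{BL_lower} to the pair $\eta+t(\xi-\eta)$ and $\eta$, whose difference is $t(\xi-\eta)$, sandwiches the integrand between constant multiples of $t|\xi-\eta|^2(|\eta+t(\xi-\eta)|+|\eta|)^{p-2}$. The whole question therefore reduces to the scalar equivalence
\begin{equation*}
\int_0^1 t\left(|\eta+t(\xi-\eta)|+|\eta|\right)^{p-2}\,dt \approx (|\xi-\eta|+|\eta|)^{p-2}.
\end{equation*}

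The main obstacle is this scalar equivalence, since $a\mapsto a^{p-2}$ reverses monotonicity across $p=2$ and $|\eta+t(\xi-\eta)|$ may vanish along the segment even when $\eta\neq 0$. The triangle inequalities give the uniform comparison
\begin{equation*}
\tfrac{1}{2}(|\eta|+t|\xi-\eta|) \leq |\eta+t(\xi-\eta)|+|\eta| \leq 2(|\eta|+t|\xi-\eta|),
\end{equation*}
so the integrand is equivalent, with constants depending only on $p$, to $t(|\eta|+ta)^{p-2}$ where $a:=|\xi-\eta|$. Setting $r=|\eta|/(|\eta|+a)$ and $s=a/(|\eta|+a)$, the homogeneous substitution $|\eta|+ta = (|\eta|+a)(r+ts)$ factors out the scale and reduces the task to showing
\begin{equation*}
\int_0^1 t(r+ts)^{p-2}\,dt \approx 1
\end{equation*}
uniformly over $r,s\geq 0$ with $r+s=1$. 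A short case split (according to whether $r\geq 1/2$ or $s\geq 1/2$, and whether $p\geq 2$ or $p<2$) using the monotonicity of $a\mapsto a^{p-2}$ in each regime bounds $(r+ts)^{p-2}$ above and below by positive constants and closes this step. Substituting back and integrating over $\Omega$ produces the required constants $\mu_\Phi$ and $L_\Phi$, depending only on $p$, and completes the proof.
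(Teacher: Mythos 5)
Your proof is correct and takes essentially the same route as the paper: the fundamental-theorem-of-calculus representation of the Bregman distance combined with the Barrett--Liu inequalities~\cref{BL}, followed by a uniform-in-$t$ comparison of $\left( |\eta + t(\xi-\eta)| + |\eta| \right)^{p-2}$ with $\left( |\xi-\eta| + |\eta| \right)^{p-2}$. The only cosmetic difference is that you argue pointwise and settle the $t$-integral by a homogeneity reduction and case split, whereas the paper keeps the integral over $\Omega$ and absorbs the $t$-dependence through its inequality~\cref{vector_ineq}; in both cases the constants depend only on $p$, hence are independent of $h$.
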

\begin{proof}
By the definition of $D_F(u,v)$ given in~\eqref{Bregman_distance} and the fundamental theorem of calculus, we have
\begin{equation*} \begin{split} 
D_F (u,v) &= \int_0^1 \langle F'(v + t(u-v)), u-v \rangle \,dt - \langle F'(v), u-v \rangle \\
&= \int_0^1 \frac{1}{t} \langle F'(v + t(u-v)) - F'(v), t(u-v) \rangle \,dt. 
\end{split} \end{equation*}
With $u_t = v + t(u-v)$, we see that 
\begin{equation*} \begin{split}
D_F(u,v)
&\stackrel{\eqref{Frechet}}{=} \int_0^1 \frac{1}{t} \int_{\Omega} \left ( |\nabla u_t |^{p-2}\nabla u_t - |\nabla v |^{p-2}\nabla v \right )\cdot \nabla (u_t - v) \,dx dt \\ 
&\leq \int_0^1 \frac{1}{t} \int_{\Omega} \left | |\nabla u_t|^{p-2}\nabla u_t - |\nabla v|^{p-2} \nabla v \right | \left | \nabla (u_t - v) \right | \,dx dt \\
&\stackrel{\eqref{BL_upper}}{\lesssim} \int_0^1 \frac{1}{t} \int_{\Omega} \left ( |\nabla  u_t| + |\nabla v| \right )^{p-2} \left | \nabla (u_t - v) \right |^2 \, dx dt \\
&= \int_0^1 t \int_{\Omega} \left ( \left| \nabla \left( v + t(u - v) \right) \right| + |\nabla v| \right )^{p-2} \left | \nabla (u-v) \right |^{2} \, dx dt.  
\end{split} \end{equation*}
Now, we invoke the inequality
\begin{equation}
\label{vector_ineq}
\frac{t}{2} \left( |\xi| + |\eta| \right) \leq |\xi + t \eta| + |\xi| \leq 2 \left( |\xi| + |\eta| \right), \quad \xi, \eta \in \mathbb{R}^2, \text{ } t \in [0,1],
\end{equation} 
to obtain that 
\begin{equation*}
D_F(u,v) \lesssim \int_0^1 t \,dt \cdot \int_{\Omega} \left ( |\nabla v| + |\nabla (u-v)| \right )^{p-2} \left | \nabla (u-v) \right |^{2} \,dx \approx \| u - v \|_{(\nabla v)}^2.
\end{equation*}
Hence, we proved $D_F (u,v) \lesssim \| u - v \|_{(\nabla v)}^2$.
The inequality $D_F (u,v) \gtrsim \| u - v \|_{(\nabla v)}^2$ can be shown in a similar manner using~\eqref{BL_lower} and~\eqref{vector_ineq}.
\end{proof}

\subsection{Quasi-norm stable decomposition}
The core step in the convergence analysis of additive Schwarz methods typically involves verifying a stable decomposition property; see, e.g.,~\cite[Eq.~(13)]{TX:2002} and~\cite[Assumption~4.1]{Park:2020}.
In this section, we derive a quasi-norm stable decomposition property associated with the space decomposition~\eqref{2L}.
A key distinction of the quasi-norm stable decomposition property considered in this section compared to the existing ones is that, we use the quasi-norm $\| \cdot \|_{(\nabla v)}$ while the existing ones are written in terms of norms.
As~\eqref{Bregman_rough} implies, a power of norm cannot approximate the Bregman distance of $F$ by a multiplicative constant if $p \neq 2$.
Our main insight is that if the quasi-norm can approximate the Bregman distance of $F$ up to a multiplicative constant, i.e., if it satisfies an estimate of the form~\eqref{Bregman}, then we can derive the asymptotic linear convergence of \cref{Alg:ASM} using this property.

We recall that two key ingredients for the stable decomposition analysis for linear elliptic problems are the Poincar\'{e}--Friedrichs inequality and interpolation error estimate; see~\cite[Chapter~3]{TW:2005}.
Therefore, we need to establish these theories with respect to the quasi-norm for the stable decomposition analysis of the $p$-Laplacian.

In \cref{Lem:Poincare_greater,Lem:Poincare_less}, we present quasi-norm Poincar\'{e}--Friedrichs inequalities for the cases $p \in (2, \infty)$ and $p \in (1,2)$, respectively, that are suitable for our purposes; more general results are proven in \cref{Sec:Poincare}.
\begin{lemma}
\label{Lem:Poincare_greater}
    Let $p \in (2, \infty)$ and $v \in S_h (\Omega)$. Assume that every maximal polygonal region $R \subset \Omega$ with $| \nabla v | \neq 0$ satisfies that $\partial R \cap \partial \Omega$ contains an element edge.
    Then there exists a positive constant $C_{p,v}^{\PF}$ such that
    \begin{equation*}
        \int_{\Omega} (|w| + |\nabla v|)^{p-2} |w|^2 \,dx 
        \leq C_{p,v}^{\PF} \| w \|_{(\nabla v)}^2, \quad w \in W_0^{1,p} (\Omega).
    \end{equation*}
    Moreover, if $| \nabla v |$ does not vanish on $\Omega$, then $C_{p,v}^{\PF}$ has an upper bound $\overline{C}_{p,v}^{\PF}$ that is continuous at $v$ in $S_h (\Omega)$.
\end{lemma}

\begin{lemma}
    \label{Lem:Poincare_less}
    Let $p \in (1, 2)$ and $v \in S_h (\Omega)$. Assume that every maximal polygonal region $S \subset \Omega$ with $| \nabla v | = 0$ satisfies that $\partial S \cap \partial \Omega$ contains an element edge.
    Then there exists a positive constant $C_{p,v}^{\PF}$ such that
    \begin{equation*}
        \int_{\Omega} (|w| + |\nabla v|)^{p-2} | w |^2 \,dx 
        \leq C_{p,v}^{\PF} \| w \|_{(\nabla v)}^2, \quad w \in W_0^{1,p} (\Omega).
    \end{equation*}
    Moreover, if $| \nabla v |$ does not vanish on $\Omega$, then $C_{p,v}^{\PF}$ has an upper bound $\overline{C}_{p,v}^{\PF}$ that is continuous at $v$ in $S_h (\Omega)$.
\end{lemma}

As stated in \cref{Lem:Poincare_greater,Lem:Poincare_less}, the quasi-norm Poincar\'{e}--Friedrichs inequality holds for all choices of $v$ except for certain exceptional cases, which are detailed in \cref{Ex:counterexample_greater,Ex:counterexample_less}.
Moreover, in most cases, the constant $C_{p,v}^{\PF}$ demonstrates only a weak dependence on $v$.
By the quasi-monotone argument~\cite{GE:2010,PS:2013} presented in \cref{Sec:Poincare}, we can ensure that the value of $C_{p,v}^{\PF}$ is influenced by the local variation of $| \nabla v |$ only.
Consequently, even if $| \nabla v |$ exhibits significant global variation, $C_{p,v}^{\PF}$ has a moderate value.
One may refer to~\cite{SVZ:2012} for relevant numerical evidences.

\begin{remark}
    \label{Rem:regularization}
    As noted in \cref{Lem:Poincare_greater,Lem:Poincare_less}, the quasi-norm Poincare--Friedrichs inequality may not hold in cases where $\nabla v$ vanishes in a certain pattern, which makes the convergence analysis of the algorithm challenging.
    In order to address this issue, one may consider regularization techniques as described in~\cite{DFTW:2020,LCZ:2021}.
    However, we do not adopt such techniques since they require a delicate convergence analysis for the case when the regularization parameter tends to $0$.
\end{remark}

Next, we establish a quasi-norm error estimate for the nodal interpolation operator $I_h$ onto the finite element space $S_h (\Omega)$, as summarized in \cref{Lem:interpolation}.

\begin{lemma}
\label{Lem:interpolation}
Let $w \in W_0^{1,p} (\Omega)$ be a continuous, piecewise quadratic function defined on $\cT_h$ and let $v \in S_h (\Omega)$.
Then, there exists a positive constant $C$, independent of $w$, $v$, and $h$, such that
\begin{equation*}
    \|I_h w\|_{(\nabla v)} \leq C \| w \|_{(\nabla v)}.
\end{equation*}
\end{lemma}
\begin{proof}
Take any $T \in \cT_h$.
We first prove that $I_h$ achieves the local $W^{1,1}$-stability; invoking the inverse inequality~\cite[Lemma~12.1]{EG:2021} and the $H^1$-stability~\cite[Lemma~3.9]{TW:2005} yields
\begin{equation}
\label{W11_stable}
| I_h w |_{W^{1,1}(T)}
\lesssim h | I_h w |_{H^1 (T)}
\lesssim h | w |_{H^1 (T)}
\lesssim | I_h w |_{W^{1,1}(T)}.
\end{equation}
Now, we proceed similarly as in the proof of~\cite[Theorem~4.5]{DR:2007}.
Recall that $| \nabla v |$ is constant on $T$, say $a = |\nabla v| \geq 0$.
Since the map $x \mapsto (x + a)^{p-2} x^2$~($x \geq 0$) is increasing and convex, we have
\begin{equation} \begin{split}
\label{stable_local}
\int_T (| \nabla (I_h w) | &+ a)^{p-2} | \nabla (I_h w)|^2 \,dx \\
&\stackrel{\text{(i)}}{\lesssim} \int_T \left( h^{-2} \int_T | \nabla (I_h w) (y) | \,dy + a \right)^{p-2} \left( h^{-2} \int_T |\nabla (I_h w)| \,dy \right)^2 \,dx \\
&\stackrel{\eqref{W11_stable}}{\lesssim} h^2 \left( h^{-2} \int_T | \nabla w | \,dy + a \right)^{p-2} \left( h^{-2} \int_T | \nabla w | \,dy \right)^2 \\
&\stackrel{\text{(ii)}}{\lesssim} \int_T ( | \nabla w | + a )^{p-2} | \nabla w|^2 \,dy
\end{split}\end{equation}
    where (i) is due to the inverse inequality~(cf.~\cite[Eq.~(2.4)]{DR:2007})
\begin{equation*}
| u |_{W^{1, \infty} (T)} \lesssim h^{-2} | u |_{W^{1, 1} (T)}, \quad u \in S_h (\Omega),
\end{equation*}
and (ii) is due to the Jensen inequality.
By summing~\eqref{stable_local} over all elements $T \in \cT_h$, we arrive at the conclusion. 
\end{proof}

Using \cref{Lem:Poincare_greater,Lem:Poincare_less,Lem:interpolation}, we obtain the following quasi-norm stable decomposition result.

\begin{lemma}
\label{Lem:stable}
Suppose that $p \in (1, \infty)$ and $v \in V$ satisfy either the assumptions in \cref{Lem:Poincare_greater} or those in \cref{Lem:Poincare_less}.
Then there exists a positive constant $C_{p,v}^{\SD}$ depending on $p$ and $v$ such that the following holds:
for any $w \in V$, there exist $w_k \in V_k$, $0 \leq k \leq N$, such that
\begin{eqnarray*}
w = \sum_{k=0}^N R_k^* w_k, \\
\sum_{k=0}^N D_F (v + R_k^* w_k, v) \leq C_{p, v}^{\SD} \| w \|_{(\nabla v)}^2, \\
C_{p,v}^{\SD} \lesssim 1 + C_{p, v}^{\PF} \left( \frac{H}{\delta} \right)^p,
\end{eqnarray*}
where the constant $C_{p,v}^{\PF}$ was given in either \cref{Lem:Poincare_greater} or \cref{Lem:Poincare_less}.
\end{lemma}
\begin{proof}
Throughout this proof, let an index $k$ runs from $1$ to $N$.
Take any $u, v \in V$, and let $w = u - v$.
We define $w_0 \in V_0$ and $w_k \in V_k$ as
\begin{equation*}
    w_0 = \tilde{I}_H w, \quad
    R_k^* w_k = I_h ( \theta_k \tw ),
\end{equation*}
where $\tilde{I}_H$ is the Scott--Zhang quasi-interpolation operator onto $S_H (\Omega)$~\cite{SZ:1990}, and  $\tw = w - R_0^* w_0$.
It is clear that $w = R_0^* w_0 + \sum_{k=1}^N R_k^* w_k$.
Invoking \cref{Lem:Bregman} and~\cite[Theorem~4.5]{DR:2007}, we have
\begin{equation}
    \label{Lem:stable_1}
    D_F ( v + R_0^* w_0, v) \lesssim \| \tilde{I}_H w \|_{(\nabla v)}^2
    \lesssim \| w \|_{(\nabla v)}^2.
\end{equation}
Similarly, \cref{Lem:Bregman,Lem:interpolation} imply
\begin{equation}
    \label{Lem:stable_2}
    D_F (v + R_k^* w_k, v)
    \lesssim \sum_{k=1}^N \| I_h (\theta_k \tw) \|_{(\nabla v)}^2
    \lesssim \sum_{k=1}^N \| \theta_k \tw \|_{(\nabla v)}^2.
\end{equation}
Note that the map $x \mapsto (x + a)^{p-2}x^2$~($x \geq 0$) is increasing for any $a \geq 0$.
It follows that
\begin{equation} \begin{split}
    \label{Lem:stable_3}
    \| \theta_k &\tw \|_{(\nabla v)}^2
    \leq \int_{\Omega} \left( \theta_k |\nabla \tw | + |\tw| |\nabla \theta_k| + |\nabla v | \right)^{p-2} \left( \theta_k |\nabla \tw | + |\tw| |\nabla \theta_k| \right)^2 \,dx \\
    &\stackrel{\text{(i)}}{\lesssim} \int_{\Omega} \left( \theta_k |\nabla \tw| + |\nabla v| \right)^{p-2} \left( \theta_k |\nabla \tw | \right)^2 \,dx
    + \int_{\Omega} \left( |\tw| |\nabla \theta_k| + |\nabla v| \right)^{p-2} \left( |\tw | |\nabla \theta_k| \right)^2 \,dx \\
    &\stackrel{\text{(ii)}}{\lesssim} \| \tw \|_{(\nabla v)}^2
    + \frac{1}{\delta^p} \int_{\Omega} \left( |\tw| + |\nabla v| \right)^{p-2} |\tw|^2 \,dx \\
    &\stackrel{\text{(iii)}}{\leq} \left( 1 + \frac{C_{p, v}^{\PF}}{\delta^p} \right) \| \tw \|_{( \nabla v)}^2,
\end{split} \end{equation}
where (i) is because of the triangle inequality-like result presented in~\cite[Lemma~5.4]{LY:2001}, (ii) is due to~\eqref{pou}, and (iii) is due to \cref{Lem:Poincare_greater,Lem:Poincare_less}.
Meanwhile, we observe that~\cite[Theorem~4.6]{DR:2007} implies
\begin{equation}
    \label{Lem:stable_4}
    \| \tw \|_{(\nabla v)}^2  = \| w - \tilde{I}_H w \|_{(\nabla v)}^2 \lesssim H^p \| w \|_{(\nabla v)}^2.
\end{equation}
Combining~\eqref{Lem:stable_1},~\eqref{Lem:stable_2},~\eqref{Lem:stable_3}, and~\eqref{Lem:stable_4} yields the desired result.
\end{proof}


\begin{remark}
\label{Rem:trace}
The estimate presented in \cref{Lem:stable} is not as sharp as the one in the norm-stable decomposition result given in \cref{Lem:2L}.
Specifically, in \cref{Lem:stable}, the power of $H/\delta$ is $p$, while in \cref{Lem:2L}, it is $p-1$.
The norm-stable decomposition achieves the sharp $(H/\delta)^{p-1}$-result by using a trace theorem-type argument introduced in~\cite{DW:1994}; see also~\cite[Lemma~3.10]{TW:2005}.
Unfortunately, we were unable to make a similar argument in our quasi-norm analysis because the quasi-norm does not have a notion of trace.
To obtain a sharp estimate, it will be necessary to define an appropriate trace for the quasi-norm, which is remained as a topic for future research.
\end{remark}

\subsection{Proof of \texorpdfstring{\cref{Thm:linear}}{Theorem 2.4}}
The proof of \cref{Thm:linear} presented here uses a similar argument to~\cite{Park:2020}.
However, due to the nonlinearity of the quasi-norm $\| \cdot \|_{(\nabla v)}$, we have to make a careful consideration on dealing with $\| \cdot \|_{(\nabla v)}$.
In \cref{Lem:ASM}, we state the generalized additive Schwarz lemma~(see~\cite[Lemma~4.5]{Park:2020}) applied to \cref{Alg:ASM} in a form suitable for our purposes.

\begin{lemma}[generalized additive Schwarz lemma]
\label{Lem:ASM}
Let $\{ u^{(n)} \}$ be the sequence generated by \cref{Alg:ASM}.
Then it satisfies
\begin{equation*}
    u^{(n+1)} \in \argmin_{u \in V} \left\{ F (u^{(n)}) + \langle F'(u^{(n)}), u-u^{(n)} \rangle + M_{\tau} (u, u^{(n)}) \right\},
\end{equation*}
where the functional $M_{\tau} \colon V \times V \rightarrow \mathbb{R}$ is given by
\begin{equation*}
    M_{\tau}(u,v) = \tau \inf \left\{ \sum_{k=0}^N D_F ( v + R_k^* w_k, v) : u-v = \tau \sum_{k=0}^N R_k^* w_k, \text{ } w_k \in V_k \right\}, \quad u,v \in V.
\end{equation*}
\end{lemma}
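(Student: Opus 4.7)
The plan is to unwind both the definition of $M_\tau$ and that of the Bregman distance so that minimizing the right-hand side over $u \in V$ reduces to $N+1$ decoupled local problems, each of which is exactly the subproblem solved in the first line of \cref{Alg:ASM}.

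First, I would rewrite the nested $\min_u \inf_{\{w_k\}}$ as a single unconstrained minimization over tuples $(w_0,\dots,w_N) \in V_0 \times \cdots \times V_N$. By the space decomposition \cref{2L}, every $u \in V$ can be written as $u^{(n)} + \tau \sum_{k=0}^N R_k^* w_k$ for some $w_k \in V_k$, so the value of $u$ is determined by the tuple and the constrained double minimization collapses to an unconstrained one over the tuples.

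Second, I would substitute \cref{Bregman_distance} into $M_\tau$ and use the constraint $u - u^{(n)} = \tau \sum_k R_k^* w_k$ to cancel the linear term $\langle F'(u^{(n)}), u - u^{(n)} \rangle$ against the linear contributions $-\tau \sum_k \langle F'(u^{(n)}), R_k^* w_k \rangle$ coming from each Bregman distance. After cancellation, the functional reduces to
\begin{equation*}
(1 - \tau(N+1)) F(u^{(n)}) + \tau \sumk F(u^{(n)} + \Rw).
\end{equation*}
Since this expression is separable in the $w_k$'s, its minimum is attained by independently minimizing $F(u^{(n)} + R_k^* w_k)$ over $w_k \in V_k$ for each $k$, which is precisely the rule defining $w_k^{(n+1)}$ in \cref{Alg:ASM}.

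Setting $u = u^{(n)} + \tau \sum_{k=0}^N R_k^* w_k^{(n+1)} = u^{(n+1)}$ then attains the minimum, yielding the stated characterization. The argument is essentially bookkeeping; the only point to watch is the algebraic cancellation of the linear-in-$w_k$ terms, which is what permits the otherwise coupled minimization over $V$ to decouple across the local subspaces $V_k$.
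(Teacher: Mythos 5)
Your argument is correct: the cancellation of the linear terms via the constraint $u-u^{(n)}=\tau\sum_k R_k^* w_k$, the resulting separable objective $(1-\tau(N+1))F(u^{(n)})+\tau\sum_k F(u^{(n)}+R_k^*w_k)$, and the observation that the decomposition \cref{2L} makes the joint minimization over tuples unconstrained together give exactly the stated characterization, with $u^{(n+1)}$ attaining the minimum because each $w_k^{(n+1)}$ minimizes its own decoupled term. The paper does not prove \cref{Lem:ASM} itself but cites~\cite[Lemma~4.5]{Park:2020}; your proof is precisely that lemma's argument specialized to the smooth, unconstrained setting here, so it is essentially the same approach, just written out in full.
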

\begin{proof}
Here, we provide a simple proof that does not rely on convex analysis tools.
We define a functional $Q_n \colon V \rightarrow \mathbb{R}$ as
\begin{equation*}
    Q_n (u) = F( u^{(n)} ) + \langle F' (u^{(n)}), u - u^{(n)} \rangle + M_{\tau} (u, u^{(n)} ), \quad u \in V.
\end{equation*}
For any $u \in V$, invoking~\eqref{Bregman_distance} with some direct computation yields
\begin{equation*}
\begin{split}
    Q_n (u)
    &= (1 - \tau N) F(u^{(n)}) + \tau \inf \left\{ \sum_{k=0}^N F (u^{(n)} + R_k^* w_k ) : u - u^{(n)} = \tau \sum_{k=0}^N R_k^* w_k, \text{ } w_k \in V_k \right\} \\
    &\geq (1 - \tau N) F( u^{(n)}) + \tau \sum_{k=0}^N \min_{w_k \in V_k} F( u^{(n)} + R_k^* w_k ) \\
    &= (1 - \tau N) F(u^{(n)}) + \tau \sum_{k=0}^N F( u^{(n)} + R_k^* w_k^{(n+1)}) \\
    &\geq Q_n (u^{(n+1)}).
\end{split}
\end{equation*}
That is, $u^{(n+1)}$ minimizes $Q_n$, which is our desired result.
\end{proof}

In the following, similar to~\cite[Lemma~4.6]{Park:2020}, we prove that $M_{\tau}$ defined in \cref{Lem:ASM} is bounded below by $D_F$ and above by $\| \cdot \|_{(\nabla v)}^2$ up to a multiplicative constant.
We note that \cref{Lem:M_bound} can be regarded as a variant of the Lipschitz-like/Convexity condition discussed in~\cite{Teboulle:2018}.

\begin{lemma}
\label{Lem:M_bound}
Suppose that $p \in (1, \infty)$ and $v \in V$ satisfy either the assumptions in \cref{Lem:Poincare_greater} or those in \cref{Lem:Poincare_less}.
Then we have
\begin{equation*}
    D_F(u,v) \leq M_{\tau}(u,v) \leq \frac{C_{p,v}^{\SD}}{\tau^{\max \{p, 2\} - 1}} \| u -v \|_{(\nabla v)}^2, \quad u \in V,
\end{equation*}
where $C_{p, v}^{\SD}$ and $M_{\tau}$ were defined in \cref{Lem:ASM,Lem:stable}, respectively.
\end{lemma}
\begin{proof}
We define $\bar{u} \in V$ by the following:
\begin{equation*}
    \bar{u}-v = \frac{1}{\tau} (u-v).
\end{equation*}
By \cref{Lem:stable}, there exist $w_k \in V_k$, $0 \leq k \leq N$, such that
\begin{equation*}
\begin{split}
    \bar{u} - v = \sum_{k=0}^N R_k^* w_k, \\
    \sum_{k=0}^N D_F (v + R_k^* w_k, v) \leq C_{p, v}^{\SD} \| \bar{u} - v \|_{(\nabla v)}^2.
\end{split}
\end{equation*}
Note that $u - v = \tau \sum_{k=0}^N R_k^* w_k$.
It follows by \cref{Lem:scaling} that
\begin{equation*}
    \tau \sum_{k=0}^N D_F (v + R_k^* w_k, v) \leq \tau C_{p, v}^{\SD} \| \bar{u} - v \|_{(\nabla v)}^2 \leq \frac{C_{p, v}^{\SD}}{\tau^{\max \{ p, 2 \} - 1}} \\| u - v \|_{(\nabla v)}^2.
\end{equation*}
Meanwhile, invoking \cref{Lem:convex} yields
\begin{equation*}
    \tau \sum_{k=0}^N D_F(v + R_k^* w_k, v) = \tau \sum_{k=0}^N F(v + R_k^* w_k) - \tau (N+1) F(v) - \langle F'(v), u-v \rangle \geq D_F (u,v),
\end{equation*}
which completes the proof.
\end{proof}

By closely following the argument in~\cite[Appendix~A.4]{Park:2020} and manipulating $\| \cdot \|_{(\nabla v)}$-terms using the properties of $\| \cdot \|_{(\nabla v)}$ presented in \cref{Lem:scaling,Lem:symmetry,Lem:Bregman}, we establish the following lemma, which provides an estimate for the ratio of two consecutive energy errors in \cref{Alg:ASM}.

\begin{lemma}
\label{Lem:ratio}
In \cref{Alg:ASM}, suppose that $v = u^{(n)}$ satisfy either the assumptions in \cref{Lem:Poincare_greater} or those in \cref{Lem:Poincare_less} for some $n \geq 0$.
Then we have
\begin{equation*}
\frac{F(u^{(n+1)}) - F(u^*)}{F(u^{(n)}) - F(u^*)} \leq 1 - \left( 1 - \frac{1}{\up} \right) \left( \frac{\tau^{\op-1} \mu_p }{\up 2^{\hp} C_{p, u^{(n)}}^{\SD}} \right)^{\frac{1}{\up-1}},
\end{equation*}
where $\up = \min \{ p, 2\}$, $\op = \max \{p, 2 \}$, $\hp = |p - 2|$, and $\mu_p$ and $C_{p, u^{(n)}}^{\SD}$ were given in \cref{Lem:Bregman,Lem:stable}, respectively.
\end{lemma}
\begin{proof}
For $t \in [0,1]$, we write
\begin{equation}
    \label{ut}
    u_t = u^{(n)} + t(u^* - u^{(n)}).
\end{equation}
Then we have
\begin{equation*} \begin{split}
    F&(u^{(n+1)}) = F(u^{(n)}) + \langle F'(u^{(n)}), u^{(n+1)} - u^{(n)} \rangle + D_F (u^{(n+1)}, u^{(n)}) \\
    &\stackrel{\text{(i)}}{\leq} F(u^{(n)}) + \langle F'(u^{(n)}), u^{(n+1)} - u^{(n)} \rangle + M_{\tau} (u^{(n+1)}, u^{(n)}) \\
    &\stackrel{\text{(ii)}}{=} \min_{u \in V} \left\{ F(u^{(n)}) + \langle F'(u^{(n)}), u - u^{(n)} \rangle + M_{\tau} (u, u^{(n)}) \right\} \\
    &\stackrel{\text{(i)}}{\leq} \min_{t \geq 0} \left\{ F(u^{(n)}) + \langle F'(u^{(n)}), u_t - u^{(n)} \rangle + \frac{C_{p, u^{(n)}}^{\SD}}{\tau^{\op - 1}} \| u_t - u^{(n)} \|_{(\nabla u^{(n)})}^2 \right\} \\
    &\stackrel{\text{(iii)}}{\leq} \min_{t \geq 0} \left\{ F(u^{(n)}) + t \langle F'(u^{(n)}), u^* - u^{(n)} \rangle + \frac{C_{p, u^{(n)}}^{\SD} }{\tau^{\op - 1}} t^{\up } \| u^* - u^{(n)} \|_{(\nabla u^{(n)})}^2 \right\} \\
    &\stackrel{\text{(iv)}}{\leq} \min_{t \geq 0} \left\{ F(u^{(n)}) + t \langle F'(u^{(n)}), u^* - u^{(n)} \rangle + \frac{2^{\hp} C_{p, u^{(n)}}^{\SD} }{\tau^{\op - 1}} t^{\up} \| u^{(n)} - u^* \|_{(\nabla u^*)}^2 \right\} \\
    &\stackrel{\text{(v)}}{\leq} \min_{t \geq 0} \left\{ F(u^{(n)}) + t \langle F'(u^{(n)}), u^* - u^{(n)} \rangle + \frac{2^{\hp} C_{p, u^{(n)}}^{\SD} }{\tau^{\op - 1} \mu_p}  t^{\up} \left(F(u^{(n)}) - F(u^*) \right) \right\},
\end{split} \end{equation*}
where (i)--(v) are due to Lemmas~\ref{Lem:M_bound}, \ref{Lem:ASM}, \ref{Lem:scaling}, \ref{Lem:symmetry}, and \ref{Lem:Bregman}, respectively.
By the convexity of $F$, we get
\begin{equation*}
\begin{split}
    F(u^{(n+1)}) - F(u^*) &\leq \min_{t \geq 0} \left\{ 1 - t + \frac{2^{\hp} C_{p, u^{(n)}}^{\SD} }{\tau^{\op - 1} \mu_p} t^{\up} \right\} \left(F(u^{(n)}) - F(u^*) \right) \\
    &= \left[ 1 - \left( 1 - \frac{1}{\up} \right) \left( \frac{\tau^{\op-1} \mu_p }{\up 2^{\hp} C_{p, u^{(n)}}^{\SD} } \right)^{\frac{1}{\up-1}} \right] \left(F(u^{(n)}) - F(u^*) \right),
\end{split}
\end{equation*}
which completes the proof.
\end{proof}

Finally, we are ready to present our proof of \cref{Thm:linear}.

\begin{proof}[Proof of \cref{Thm:linear}]
Suppose that $| \nabla u^* |$ does not vanish on $\Omega$.
Take any $\epsilon > 0$.
By the continuity of $\overline{C}_{p, u^*}^{\mathrm{PF}}$ stated in \cref{Lem:Poincare_greater,Lem:Poincare_less}, we can find a neighborhood $B_{\epsilon}$ of $u^*$ in $V$ such that, for any $v \in B_{\epsilon}$, $| \nabla v |$ does not vanish on $\Omega$ and 
\begin{equation}
\label{B_epsilon}
\overline{C}_{p, v}^{\mathrm{PF}} \leq \overline{C}_{p, u^*}^{\mathrm{PF}} + \epsilon.
\end{equation}
Meanwhile, by \cref{Prop:Park} and~\eqref{Bregman_rough}~(see~\cite[Section~6.1]{Park:2020} for the precise statement of~\eqref{Bregman_rough}), we deduce that the sequence $\{ u^{(n)} \}$ converges to $u^*$ in $V$.
Hence, there exists $n_0$ such that, if $n \geq n_0$, then $u^{(n)} \in B_{\epsilon}$.
By \cref{Lem:stable}, for $n \geq n_0$, we have
\begin{equation*}
C_{p, u^{(n)}}^{\SD} \lesssim 1 + C_{p, u^{(n)}}^{\PF} \left( \frac{H}{\delta} \right)^p
\leq 1 + \overline{C}_{p, u^{(n)}}^{\PF} \left( \frac{H}{\delta} \right)^p
\stackrel{\eqref{B_epsilon}}{\leq} 1 + \left( \overline{C}_{p, u^*}^{\PF} + \epsilon \right) \left( \frac{H}{\delta} \right)^p.
\end{equation*}
It follows by \cref{Lem:ratio} that
\begin{equation*}
\begin{split}
\left( 1 - \frac{F(u^{(n+1)}) - F(u^*)}{F(u^{(n)}) - F(u^*)} \right)^{-1}
&\leq \frac{\up}{\up - 1} \left( \frac{\up 2^{\hp} C_{p, u^{(n)}}^{\SD} }{\tau^{\op - 1} \mu_p} \right)^{\frac{1}{\up - 1}} \\
&\lesssim \frac{\up}{\up - 1} \left( \frac{\up  2^{\hp}}{\tau^{\op - 1} \mu_p } \right)^{\frac{1}{\up - 1}}\left[ 1 + \left( \overline{C}_{p, u^*}^{\PF} + \epsilon \right) \left( \frac{H}{\delta} \right)^p \right]^{\frac{1}{\up - 1}}.
\end{split}
\end{equation*}
Since $\epsilon$ is arbitrary, we obtain the desired result with
\begin{equation*}
    \gamma = \frac{\up}{\up - 1} \left( \frac{\up  2^{\hp}}{\tau^{\op - 1} \mu_p } \right)^{\frac{1}{\up - 1}}\left[ 1 +  \overline{C}_{p, u^*}^{\PF} \left( \frac{H}{\delta} \right)^p \right]^{\frac{1}{\up - 1}}.
\end{equation*}
This completes the proof.
\end{proof}

\begin{remark}
\label{Rem:Bregman}
As stated in \cref{Lem:Bregman}, the squared quasi-norm $\| u - v \|_{(\nabla v)}^2$ is equivalent to the Bregman distance $D_F (u, v)$ up to multiplicative constants for $u, v \in W^{1,p} (\Omega)$.
This equivalence allows us to perform the convergence analysis presented in this section using the Bregman distance instead of the quasi-norm.
Indeed, the Bregman distance is frequently utilized in the analysis of convex optimization algorithms; see, e.g.,~\cite{BBT:2017,Teboulle:2018}.
Nevertheless, we opted to present the convergence analysis of \cref{Alg:ASM} in terms of the quasi-norm in this paper, because using the quasi-norm has an advantage that we can simplify our proof by borrowing some useful techniques regarding the quasi-norm introduced in from the existing literature~\cite{CLY:2006,EL:2005,LY:2001,LY:2002}.
\end{remark}

\section{Quasi-norm Poincar\'{e}--Friedrichs inequality}
\label{Sec:Poincare}
This section is devoted to the proofs of \cref{Lem:Poincare_greater,Lem:Poincare_less}.
Namely, we deal with quasi-norm Poincar\'{e}--Friedrichs inequalities of the form
\begin{equation}
    \label{Poincare_ideal}
    \int_{\Omega} (|w| + |\nabla v|)^{p-2} |w|^2 \,dx \leq C \| w \|_{(\nabla v)}^2,
\end{equation}
where $p \in (1, \infty)$ with $p \neq 2$.
Unfortunately, the inequality~\eqref{Poincare_ideal} does not hold for every $v, w  \in W_0^{1,p} (\Omega)$; see \cref{Ex:counterexample_greater,Ex:counterexample_less}.
Based on a quasi-monotonicity argument introduced in~\cite{PS:2013}, we characterize the conditions on $v$ such that the inequality~\eqref{Poincare_ideal} holds and provide a precise estimate for the Poincar\'{e}--Friedrichs constant $C$ in~\eqref{Poincare_ideal}.
Throughout this section, let $W_{\Gamma}^{1,p} (\Omega)$ denote the collection of all $W^{1,p} (\Omega)$-functions vanishing on $\Gamma \subset \partial \Omega$.
In addition, we use the conventions $0/0 = 1$ and $1/0 = \infty$.

We first observe that a particular case of~\eqref{Poincare_ideal}, when $|\nabla v|$ is constant on $\Omega$, is valid.
By the same argument as in~\cite[Lemma~3.1]{LY:2002} and~\cite[Lemma.~4.1]{CLY:2006}, we can prove the following lemma.

\begin{lemma}
    \label{Lem:Poincare_constant}
    Let $\Gamma \subset \partial \Omega$ have nonvanishing one-dimensional measure.
    Then, there exist a positive constant $C$ such that
    \begin{equation*}
        \int_{\Omega} ( |w | + a )^{p-2} |w|^2 \,dx \leq
        C \int_{\Omega} (| \nabla w | + a )^{p-2} |\nabla w|^2 \,dx,
        \quad w \in W_{\Gamma}^{1,p} (\Omega), \text{ } a \geq 0.
    \end{equation*}
\end{lemma}

For a nonnegative function $\alpha \in L^{\infty} (\Omega)$ and a partition $\mathcal{Y} = \{ Y \}_{l=1}^m$ of $\Omega$ consisting of nonoverlapping polygonal regions, we define two nonnegative functions 
$\underline{\alpha}_{\mathcal{Y}}, \overline{\alpha}^{\mathcal{Y}} \in L^{\infty} (\Omega)$ as follows:
\begin{equation*}
    \underline{\alpha}_{\mathcal{Y}} (x) = \operatornamewithlimits{ess\inf}_{Y_l} \alpha, \quad
    \overline{\alpha}^{\mathcal{Y}} (x) =
    \operatornamewithlimits{ess\sup}_{Y_l} \alpha, \quad
    x \in Y_{l}, \text{ } 1 \leq l \leq m.
\end{equation*}

In the following, we address the cases $p \in (2, \infty)$ and $p \in (1, 2)$ separately.
We first focus on the case $p \in (2, \infty)$.
In \cref{Def:quasi_increase}, we introduce the concept of \textit{quasi-monotone increase}.
We note that relevant notions were explored in~\cite{GE:2010,PS:2013}.

\begin{definition}
\label{Def:quasi_increase}
Let $\alpha \in L^{\infty} (\Omega)$ be a nonnegative function on $\Omega$, and let $\mathcal{Y} = \{ Y \}_{l=1}^m$ denote a partition of $\Omega$ into nonoverlapping polygonal regions.
\begin{enumerate}
    \item We say that the region $P_
    {l_1, l_s} = ( \overline{Y}_{l_1} \cup \dots \cup \overline{Y}_{l_s} )^{\circ}$, $1 \leq l_1, \dots, l_s \leq m$, is a quasi-monotonically increasing path from $Y_{l_1}$ to $Y_{l_s}$ with respect to $\alpha$ if the following two conditions hold:
    \begin{enumerate}
        \item For each $1 \leq i \leq s-1$, the regions $\overline{Y}_{l_i}$ and $\overline{Y}_{l_{i+1}}$ share a common edge.
        \item $\underline{\alpha}_{\mathcal{Y}} (Y_{l_1}) \leq \dots \leq \underline{\alpha}_{\mathcal{Y}} (Y_{l_s})$.
    \end{enumerate}
    \item We say that $\alpha$ is $\partial \Omega$-quasi-monotonically increasing on $\mathcal{Y}$ if, for any $1 \leq l \leq m$, there exist an index $l^*$ and a quasi-monotonically increasing path $P_{l, l^*}$ from $Y_{l}$ to $Y_{l^*}$, such that $\partial Y_{l^*} \cap \partial \Omega$ has nonvanishing one-dimensional measure.
\end{enumerate}
\end{definition}

By a similar argument as in the proof of~\cite[Theorem~2.9]{PS:2013}, we prove the following lemma.

\begin{lemma}
\label{Lem:quasi_increase}
    Assume that $p \in (2, \infty)$.
    Let $\alpha \in L^{\infty} (\Omega)$ be a nonnegative function on $\Omega$, and let $\mathcal{Y} = \{ Y \}_{l=1}^m$ denote a partition of $\Omega$ into nonoverlapping polygonal regions.
    If $\alpha$ is $\partial \Omega$-quasi-monotonically increasing on $\mathcal{Y}$, then, for each $1 \leq l \leq m$, there exists a positive constant $c_{p, \mathcal{Y}, l}$, independent of $\alpha$, such that
    \begin{equation*}
        \int_{Y_l} ( |w| + \alpha )^{p-2} |w|^2 \,dx 
        \leq c_{p, \mathcal{Y}, l} \left( \frac{\overline{\alpha}^{\mathcal{Y}} (Y_l)}{\underline{\alpha}_{\mathcal{Y}} (Y_l)} \right)^{p-2} \int_{P_{l, l^*}} ( | \nabla w| + \alpha )^{p-2} |\nabla w|^2 \,dx,
         \text{ } w \in W_0^{1,p} (\Omega),
    \end{equation*}
\end{lemma}
where the region $P_{l,l^*}$ was given in \cref{Def:quasi_increase}.
\begin{proof}
Note that the map $x \mapsto x^{p-2}$~($x \geq 0$) is increasing.
Take any $l$ such that $1 \leq l \leq m$.
Since $\alpha \leq \overline{\alpha}^{\mathcal{Y}} (Y_l)$ on $Y_l$ and $Y_l \subset P_{l, l^*}$, we get
\begin{equation}
\label{Lem:quasi_increase_1}
\begin{split}
    \int_{Y_l} (|w| + \alpha)^{p-2} |w|^2 \,dx
    &\leq \int_{Y_l} \left( |w| + \overline{\alpha}^{\mathcal{Y}} (Y_l) \right)^{p-2} |w|^2 \,dx \\
    &\leq \int_{P_{l, l^*}} \left( |w| + \overline{\alpha}^{\mathcal{Y}} (Y_l) \right)^{p-2} |w|^2 \,dx.
\end{split}
\end{equation}
Since $\partial P_{l, l^*} \cap \partial \Omega$ has nonvanishing one-dimensional measure, \cref{Lem:Poincare_constant} ensures that we have a positive constant $c_{p, \mathcal{Y}, l}$, independent of $w$ and $\alpha$, such that
\begin{equation}
\label{Lem:quasi_increase_2}
    \int_{P_{l, l^*}} \left( |w| + \overline{\alpha}^{\mathcal{Y}} (Y_l) \right)^{p-2} |w|^2 \,dx
    \leq c_{p, \mathcal{Y}, l} \int_{P_{l, l^*}} \left( |\nabla w| + \overline{\alpha}^{\mathcal{Y}} (Y_l) \right)^{p-2} |\nabla w|^2 \,dx.
\end{equation}
Invoking the inequality
\begin{equation*}
    \frac{a + \overline{\alpha}^{\mathcal{Y}} (Y_l)}{a + \underline{\alpha}_{\mathcal{Y}} (Y_l)} \leq \frac{\overline{\alpha}^{\mathcal{Y}} (Y_l)}{\underline{\alpha}_{\mathcal{Y}} (Y_l)},
    \quad a \geq 0,
\end{equation*}
we have
\begin{equation}
\label{Lem:quasi_increase_3}
\begin{split}
    \int_{P_{l, l^*}} &\left( |\nabla w| + \overline{\alpha}^{\mathcal{Y}} (Y_l) \right)^{p-2} |\nabla w|^2 \,dx \\
    &\leq \left( \frac{\overline{\alpha}^{\mathcal{Y}} (Y_l)}{\underline{\alpha}_{\mathcal{Y}} (Y_l)} \right)^{p-2} \int_{P_{l, l^*}} \left( |\nabla w| + \underline{\alpha}_{\mathcal{Y}} (Y_l) \right)^{p-2} |\nabla w|^2 \,dx \\
    &\leq \left( \frac{\overline{\alpha}^{\mathcal{Y}} (Y_l)}{\underline{\alpha}_{\mathcal{Y}} (Y_l)} \right)^{p-2} \int_{P_{l, l^*}} \left( |\nabla w| + \underline{\alpha}_{\mathcal{Y}} \right)^{p-2} |\nabla w|^2 \,dx \\
    &\leq \left( \frac{\overline{\alpha}^{\mathcal{Y}} (Y_l)}{\underline{\alpha}_{\mathcal{Y}} (Y_l)} \right)^{p-2} \int_{P_{l, l^*}} \left( |\nabla w| + \alpha \right)^{p-2} |\nabla w|^2 \,dx,
\end{split}
\end{equation}
where the penultimate inequality is because $\underline{\alpha}_{\mathcal{Y}}$ increases along $P_{l,l^*}$.
Combining Lemmas~\ref{Lem:quasi_increase_1},~\ref{Lem:quasi_increase_2}, and~\ref{Lem:quasi_increase_3} yields the desired result.
\end{proof}

Thanks to \cref{Lem:quasi_increase}, we are able to define the \textit{quasi-monotone increase constant} $C_{p, \alpha}^{\QM}$ for $p \in (2, \infty)$, as presented in \cref{Def:quasi_increase_constant}.

\begin{definition}
\label{Def:quasi_increase_constant}
    Assume that $p \in (2, \infty)$.
    Let $\alpha \in L^{\infty} (\Omega)$ be a nonnegative function on $\Omega$.
    The quasi-monotone increase constant $C_{p, \alpha}^{\QM} \in [0, \infty]$ is defined by
    \begin{equation*}
        C_{p, \alpha}^{\QM} = \inf_{\mathcal{Y}} \left\{ \max_{1 \leq l \leq m} \left( \frac{\overline{\alpha}^{\mathcal{Y}} (Y_l)}{\underline{\alpha}_{\mathcal{Y}} (Y_l)} \right)^{p-2} \cdot \sum_{l=1}^m c_{p, \mathcal{Y}, l} \right\},
    \end{equation*}
    where the constants $c_{p, \mathcal{Y}, l}$'s were given in \cref{Lem:quasi_increase} and the infimum is taken over every nonoverlapping polygonal partition $\mathcal{Y} = \{ Y_l \}_{l=1}^m$ of $\Omega$ such that $\alpha$ is $\partial \Omega$-quasi-monotonically increasing on $\mathcal{Y}$.
\end{definition}

Note that the infimum in \cref{Def:quasi_increase} is well-defined because $\alpha$ is $\partial \Omega$-quasi-monotonically increasing on the trivial partition $\{ \Omega \}$.
In terms of the quasi-monotone increase constant $C_{p, \alpha}^{\QM}$, we present a quasi-norm Poincar\'{e}--Friedrichs inequality for $p \in (2, \infty)$ in \cref{Thm:Poincare_greater}.

\begin{theorem}
\label{Thm:Poincare_greater}
    Assume that $p \in (2, \infty)$.
    Let $\alpha \in L^{\infty} (\Omega)$ be a nonnegative function on $\Omega$.
    Then we have
    \begin{equation*}
        \int_{\Omega} (|w| + \alpha )^{p-2} |w|^2 \,dx \leq C_{p, \alpha}^{\QM} \int_{\Omega} (|\nabla w| + \alpha)^{p-2} |\nabla w|^2 \,dx, \quad w \in W_0^{1,p} (\Omega),
    \end{equation*}
    where the quasi-monotone increase constant $C_{p, \alpha}^{\QM} \in [0, \infty]$ was given in \cref{Def:quasi_increase_constant}.
\end{theorem}
\begin{proof}
We fix any nonoverlapping polygonal partition $\mathcal{Y}$ such that $\alpha$ is $\partial \Omega$-quasi-monotonically increasing on $\mathcal{Y}$.
By \cref{Lem:quasi_increase}, we have
\begin{equation}
\label{Thm:Poincare_greater_1}
\begin{split}
    \int_{Y_l} (|w| + \alpha)^{p-2} |w|^2 \,dx
    &\leq c_{p, \mathcal{Y}, l} \left( \frac{\overline{\alpha}^{\mathcal{Y}} (Y_l)}{\underline{\alpha}_{\mathcal{Y}} (Y_l)} \right)^{p-2} \int_{P_{l, l^*}} ( | \nabla w| + \alpha )^{p-2} |\nabla w|^2 \,dx \\
    &\leq c_{p, \mathcal{Y}, l} \max_{1 \leq l \leq m} \left( \frac{\overline{\alpha}^{\mathcal{Y}} (Y_l)}{\underline{\alpha}_{\mathcal{Y}} (Y_l)} \right)^{p-2} \int_{\Omega} ( | \nabla w| + \alpha )^{p-2} |\nabla w|^2 \,dx.
\end{split}
\end{equation}
Summing~\eqref{Thm:Poincare_greater_1} over all $l$ followed by taking the infimum over all $\mathcal{Y}$ completes the proof.
\end{proof}

Let $W_h (\Omega)$ be the space of piecewise constant functions on the triangulation $\cT_h$.
Under an additional assumption that $\alpha \in W_h (\Omega)$, we can characterize the condition when the quasi-monotone increase constant $C_{p, \alpha}^{\QM}$ is finite.

\begin{lemma}
\label{Lem:quasi_increase_constant}
    Assume that $p \in (2, \infty)$.
    Let $\alpha \in W_h (\Omega)$ be a nonnegative piecewise constant function on $\cT_h$.
    Then, the quasi-monotone increase constant $C_{p, \alpha}^{\QM}$ is finite if and only if every maximal polygonal region $R \subset \Omega$ with $\alpha > 0$ satisfies that $\partial R \cap \partial \Omega$ has nonvanishing one-dimensional measure. 
\end{lemma}
\begin{proof}
We first assume that every maximal polygonal region $R \subset \Omega$ with $\alpha > 0$ satisfies that $\partial R \cap \partial \Omega$ has nonvanishing one-dimensional measure.
We consider the partition $\mathcal{Y}^*$ of $\Omega$ consisting of all maximal polygonal regions $\{ R_i \}$ with $\alpha > 0$ and all maximal polygonal regions $\{ S_i \}$ with $\alpha = 0$.
It is obvious that each $R_i$ forms a quasi-monotonically increasing path with respect to $\alpha$ from $R_i$ to itself.
For each $S_i$, if $\partial S_i \cap \partial \Omega$ has nonvanishing one-dimensional measure, then $S_i$ forms a quasi-monotonically increasing path with respect to $\alpha$ from $S_i$ to itself.
Otherwise, the maximality of $S_i$ implies there exists some $R_j$ such that $\overline{S}_i$ and $\overline{R}_j$ share a common edge.
Then we readily deduce that $(\overline{S}_i \cup \overline{R}_j)^{\circ}$ forms a quasi-monotonically increasing path with respect to $\alpha$ from $S_i$ to $R_j$.
Meanwhile, since $\alpha$ is piecewise constant, we have
\begin{equation*}
    \frac{\overline{\alpha}^{\mathcal{Y}^*} (R_i)}{\underline{\alpha}_{\mathcal{Y}^*} (R_i)} < \infty, \quad
    \frac{\overline{\alpha}^{\mathcal{Y}^*} (S_i)}{\underline{\alpha}_{\mathcal{Y}^*} (S_i)} = \frac{0}{0} = 1,
\end{equation*}
for every $R_i$ and $S_i$.
Hence, we conclude that $C_{p, \alpha}^{\QM} < \infty$.

Next, we suppose that there exists a maximal polygonal region $R^* \subset \Omega$ with $\alpha > 0$ such that $\partial R^* \cap \partial \Omega$ is a null set.
That is, every edge of $R^*$ is shared with a region with $\alpha = 0$.
Take any nonoverlapping polygonal partition $\mathcal{Y}$ of $\Omega$.
If $\mathcal{Y}$ has an element $Y$ such that $Y \subset R^*$, then it is impossible to find any quasi-monotonically increasing path with respect to $\alpha$ starting from $Y$, since any such path would necessarily have to pass through a region where $\alpha = 0$.
Otherwise, $\mathcal{Y}$ must contain an element $Y$ such that both $Y \cap R^*$ and $Y \setminus R^*$ are nontrivial, which implies that
\begin{equation*}
    \frac{\overline{\alpha}^{\mathcal{Y}} (Y)}{\underline{\alpha}_{\mathcal{Y}} (Y)} = \infty.
\end{equation*}
Hence, we conclude that $C_{p, \alpha}^{\QM} = \infty$, which completes the proof.
\end{proof}

Combining \cref{Thm:Poincare_greater} and \cref{Lem:quasi_increase_constant} yields \cref{Cor:Poincare_greater}, in which \cref{Lem:Poincare_greater} is a particular case $\alpha =  |\nabla v |$ of this result. 

\begin{corollary}
    \label{Cor:Poincare_greater}
    Assume that $p \in (2, \infty)$.
    Let $\alpha \in W_h (\Omega)$ be a nonnegative piecewise constant function on $\cT_h$.
    If every maximal polygonal region $R \subset \Omega$ with $\alpha > 0$ satisfies that $\partial R \cap \partial \Omega$ has nonvanishing one-dimensional measure, then we have
    \begin{equation*}
        \int_{\Omega} (|w| + \alpha )^{p-2} |w|^2 \,dx \leq C_{p, \alpha}^{\QM} \int_{\Omega} (|\nabla w| + \alpha)^{p-2} |\nabla w|^2 \,dx, \quad w \in W_0^{1,p} (\Omega),
    \end{equation*}
    where $C_{p, \alpha}^{\QM}$ is a finite constant given in \cref{Def:quasi_increase_constant}.
    Moreover, if $\alpha$ does not vanish on $\Omega$, then $C_{p,\alpha}^{\QM}$ has an upper bound $\overline{C}_{p,\alpha}^{\QM}$ that is continuous at $\alpha$ in $W_h (\Omega)$.
\end{corollary}
\begin{proof}
It suffices to find a continuous upper bound of $C_{p, \alpha}^{\QM}$ under the given condition.
We assume that $\alpha$ does not vanish on $\Omega$.
By \cref{Def:quasi_increase_constant}, we have
\begin{equation*}
C_{p, \alpha}^{\QM} \leq  \frac{\max_{\Omega} \alpha}{\min_{\Omega} \alpha} c_{p, \{ \Omega \}, 1}  =: \overline{C}_{p, \alpha}^{\QM},
\end{equation*}
where the inequality is obtained by taking $\mathcal{Y} = \{ \Omega \} = \{ Y_1 \} $ in \cref{Def:quasi_increase_constant}.
As $\alpha > 0$ in $\Omega$, it is clear that $\overline{C}_{p, \alpha}^{\QM}$ is continuous at $\alpha$ in $W_h (\Omega)$, which completes the proof.
\end{proof}

We show that, under the condition presented in \cref{Lem:quasi_increase_constant} for the quasi-monotone increase constant $C_{p, \alpha}$ to be infinite, the quasi-norm Poincar\'{e}--Friedrichs inequality of the form~\eqref{Poincare_ideal} is not valid.
For simplicity, we provide a counterexample in one-dimension; we note that the construction can be generalized to higher dimensions.

\begin{example}
\label{Ex:counterexample_greater}
Let $p \in (2, \infty)$ and $\Omega = (0, 1) \subset \mathbb{R}$.
We define $w \in W_0^{1,p} (\Omega)$ and $\alpha \in L^{\infty} (\Omega)$ as
\begin{equation*}
    w(x) = \begin{cases}
    3x, & \text{ if } 0 < x < \frac{1}{3}, \\
    1 & \text{ if } \frac{1}{3} \leq x < \frac{2}{3}, \\
    -3x+3, & \text{ if } \frac{2}{3} \leq x < 1,
    \end{cases}
    \quad
     \alpha (x) = \begin{cases}
    1, & \text{ if } \frac{1}{3} \leq x < \frac{2}{3}, \\
    0, & \text{ otherwise.}
    \end{cases} \quad
\end{equation*}
We observe that the quasi-monotone increase constant $C_{p, \alpha}$ becomes infinite because the interval $(1/3, 2/3)$ where $\alpha$ is nonzero does not touch $\partial \Omega$.
For any $\epsilon > 0$, direct calculation yields
\begin{equation*}
    \frac{\int_0^1 ( |(\epsilon w)'| + \alpha)^{p-2} |(\epsilon w)'|^2 \,dx}{\int_0^1 (|\epsilon w| + \alpha)^{p-2} |\epsilon w|^2 \,dx} \rightarrow 0
    \quad
    \text{ as } \quad \epsilon \rightarrow 0^+,
\end{equation*}
which implies that~\eqref{Poincare_ideal} does not hold.
\end{example}

We now turn to the case $p \in (1, 2)$.
In contrast to the case $p \in (2, \infty)$, which heavily relies on the quasi-monotone increase of $\alpha$, the analysis of the case $p \in (1, 2)$ hinges on the \textit{quasi-monotone decrease} of $\alpha$; see \cref{Def:quasi_decrease}.

\begin{definition}
\label{Def:quasi_decrease}
Let $\alpha \in L^{\infty} (\Omega)$ be a nonnegative function on $\Omega$, and let $\mathcal{Y} = \{ Y \}_{l=1}^m$ denote a partition of $\Omega$ into nonoverlapping polygonal regions.
\begin{enumerate}
    \item We say that the region $P_{l_1, l_s} = ( \overline{Y}_{l_1} \cup \dots \cup \overline{Y}_{l_s} )^{\circ}$, $1 \leq l_1, \dots, l_s \leq m$, is a quasi-monotonically decreasing path from $Y_{l_1}$ to $Y_{l_s}$ with respect to $\alpha$ if the following two conditions hold:
    \begin{enumerate}
        \item For each $1 \leq i \leq s-1$, the regions $\overline{Y}_{l_i}$ and $\overline{Y}_{l_{i+1}}$ share a common edge.
        \item $\overline{\alpha}_{\mathcal{Y}} (Y_{l_1}) \geq \dots \geq \overline{\alpha}_{\mathcal{Y}} (Y_{l_s})$.
    \end{enumerate}
    \item We say that $\alpha$ is $\partial \Omega$-quasi-monotonically decreasing on $\mathcal{Y}$ if, for any $1 \leq l \leq m$, there exist an index $l^*$ and a quasi-monotonically decreasing path $P_{l, l^*}$ from $Y_{l}$ to $Y_{l^*}$, such that $\partial Y_{l^*} \cap \partial \Omega$ has nonvanishing one-dimensional measure.
\end{enumerate}
\end{definition}

One can prove the following lemma by using the fact that the map $x \mapsto x^{p-2}$~($x \geq 0$) is decreasing and by following a similar argument to that used in the proof of \cref{Lem:quasi_increase}.

\begin{lemma}
\label{Lem:quasi_decrease}
    Assume that $p \in (1, 2)$.
    Let $\alpha \in L^{\infty} (\Omega)$ be a nonnegative function on $\Omega$, and let $\mathcal{Y} = \{ Y \}_{l=1}^m$ denote a partition of $\Omega$ into nonoverlapping polygonal regions.
    If $\alpha$ is $\partial \Omega$-quasi-monotonically decreasing on $\mathcal{Y}$, then, for each $1 \leq l \leq m$, there exists a positive constant $c_{p, \mathcal{Y}, l}$, independent of $\alpha$, such that
    \begin{equation*}
        \int_{Y_l} ( |w| + \alpha )^{p-2} |w|^2 \,dx \\
        \leq c_{p, \mathcal{Y}, l} \left( \frac{\underline{\alpha}_{\mathcal{Y}} (Y_l)}{\overline{\alpha}^{\mathcal{Y}} (Y_l)} \right)^{p-2} \int_{P_{l, l^*}} ( | \nabla w| + \alpha )^{p-2} |\nabla w|^2 \,dx,
         \text{ } w \in W_0^{1,p} (\Omega),
    \end{equation*}
\end{lemma}
where the region $P_{l,l^*}$ was given in \cref{Def:quasi_decrease}.

Similar to \cref{Def:quasi_increase_constant}, we present the definition of the \textit{quasi-monotone decrease constant} $C_{p, \alpha}^{\QM}$ for $p \in (1, 2)$ in the following.

\begin{definition}
\label{Def:quasi_decrease_constant}
    Assume that $p \in (1, 2)$.
    Let $\alpha \in L^{\infty} (\Omega)$ be a nonnegative function on $\Omega$.
    The quasi-monotone decrease constant $C_{p, \alpha}^{\QM} \in [0, \infty]$ is defined by
    \begin{equation*}
        C_{p, \alpha}^{\QM} = \inf_{\mathcal{Y}} \left\{ \max_{1 \leq l \leq m} \left( \frac{\underline{\alpha}_{\mathcal{Y}} (Y_l)}{\overline{\alpha}^{\mathcal{Y}} (Y_l)} \right)^{p-2} \cdot \sum_{l=1}^m c_{p, \mathcal{Y}, l} \right\},
    \end{equation*}
    where the constants $c_{p, \mathcal{Y}, l}$'s were given in \cref{Lem:quasi_decrease} and the infimum is taken over every nonoverlapping polygonal partition $\mathcal{Y}$ of $\Omega$ such that $\alpha$ is $\partial \Omega$-quasi-monotonically decreasing on $\mathcal{Y}$.
\end{definition}

In terms of the quasi-monotone decrease constant $C_{p, \alpha}^{\QM}$, we present a quasi-norm Poincar\'{e}--Friedrichs inequality for $p \in (1, 2)$ in \cref{Thm:Poincare_less}, which can be proven in a similar manner to \cref{Thm:Poincare_greater}.

\begin{theorem}
\label{Thm:Poincare_less}
    Assume that $p \in (1, 2)$.
    Let $\alpha \in L^{\infty} (\Omega)$ be a nonnegative function on $\Omega$.
    Then we have
    \begin{equation*}
        \int_{\Omega} (|w| + \alpha )^{p-2} |w|^2 \,dx \leq C_{p, \alpha}^{\QM} \int_{\Omega} (|\nabla w| + \alpha)^{p-2} |\nabla w|^2 \,dx, \quad w \in W_0^{1,p} (\Omega),
    \end{equation*}
    where the quasi-monotone decrease constant $C_{p, \alpha}^{\QM} \in [0, \infty]$ was given in \cref{Def:quasi_decrease_constant}.
\end{theorem}

The following lemma characterizes the condition when the quasi-monotone decrease constant $C_{p, \alpha}$ is finite, under an additional assumption that $\alpha \in W_h (\Omega)$, i.e., $\alpha$ is a nonnegative piecewise constant function on the triangulation $\cT_h$.

\begin{lemma}
\label{Lem:quasi_decrease_constant}
    Assume that $p \in (1, 2)$.
    Let $\alpha \in W_h (\Omega)$ be a nonnegative piecewise constant function on $\cT_h$.
    Then, the quasi-monotone decrease constant $C_{p, \alpha}^{\QM}$ is finite if and only if every maximal polygonal region $S \subset \Omega$ with $\alpha = 0$ satisfies that $\partial S \cap \partial \Omega$ has nonvanishing one-dimensional measure. 
\end{lemma}
\begin{proof}
The proof is analogous to that of \cref{Lem:quasi_increase_constant}.
\end{proof}

We obtain \cref{Cor:Poincare_less}, in which \cref{Lem:Poincare_less} is a particular case $\alpha = |\nabla u |$, as a direct consequence of \cref{Thm:Poincare_less} and \cref{Lem:quasi_decrease_constant}.

\begin{corollary}
    \label{Cor:Poincare_less}
    Assume that $p \in (1, 2)$.
    Let $\alpha \in W_h (\Omega)$ be a nonnegative piecewise constant function on $\cT_h$.
    If every maximal polygonal region $S \subset \Omega$ with $\alpha = 0$ satisfies that $\partial S \cap \partial \Omega$ has nonvanishing one-dimensional measure, then we have
    \begin{equation*}
        \int_{\Omega} (|w| + \alpha )^{p-2} |w|^2 \,dx \leq C_{p, \alpha}^{\QM} \int_{\Omega} (|\nabla w| + \alpha)^{p-2} |\nabla w|^2 \,dx, \quad w \in W_0^{1,p} (\Omega),
    \end{equation*}
    where $C_{p, \alpha}^{\QM}$ is a finite constant given in \cref{Def:quasi_decrease_constant}.
    Moreover, if $\alpha$ does not vanish on $\Omega$, then $C_{p,\alpha}^{\QM}$ has an upper bound $\overline{C}_{p,\alpha}^{\QM}$ that is continuous at $\alpha$ in $W_h (\Omega)$.
\end{corollary}

Finally, we present a counterexample of the quasi-norm Poincar\'{e}--Friedrichs inequality~\eqref{Poincare_ideal} under the condition presented in \cref{Lem:quasi_decrease_constant} for the quasi-monotone decrease constant $C_{p, \alpha}^{\QM}$ to be infinite.

\begin{example}
\label{Ex:counterexample_less}
Let $p \in (1, 2)$ and $\Omega = (0, 1) \subset \mathbb{R}$.
We define $w \in W_0^{1,p} (\Omega)$ and $\alpha \in L^{\infty} (\Omega)$ as
\begin{equation*}
    w(x) = \begin{cases}
    3x, & \text{ if } 0 < x < \frac{1}{3}, \\
    1 & \text{ if } \frac{1}{3} \leq x < \frac{2}{3}, \\
    -3x+3, & \text{ if } \frac{2}{3} \leq x < 1,
    \end{cases}
    \quad
     \alpha (x) = \begin{cases}
    0, & \text{ if } \frac{1}{3} \leq x < \frac{2}{3}, \\
    1, & \text{ otherwise.}
    \end{cases} \quad
\end{equation*}
We observe that the quasi-monotone increase constant $C_{p, \alpha}^{\QM}$ becomes infinite because the interval $(1/3, 2/3)$ where $\alpha$ vanishes does not touch $\partial \Omega$.
For $\epsilon > 0$, direct calculation yields
\begin{equation*}
    \frac{\int_0^1 ( |(\epsilon w)'| + \alpha)^{p-2} |(\epsilon w)'|^2 \,dx}{\int_0^1 (|\epsilon w| + \alpha)^{p-2} |\epsilon w|^2 \,dx}
    \rightarrow 0 \quad
    \text{ as } \quad \epsilon \rightarrow 0^+,
\end{equation*}
which implies that~\eqref{Poincare_ideal} does not hold.
\end{example}

\section{Numerical experiments}
\label{Sec:Numerical}
In this section, we present numerical results of the two-level additive Schwarz method for the $p$-Laplacian, which support our theoretical findings. 
All the algorithms were implemented in MATLAB R2022b.
They were executed on a desktop equipped with AMD Ryzen~5 5600X CPU (3.7GHz, 6C), 40GB RAM, and the operating system Windows~10 Pro.

\begin{figure}
\centering
  \subfloat[][]{\includegraphics[height=0.28\hsize]{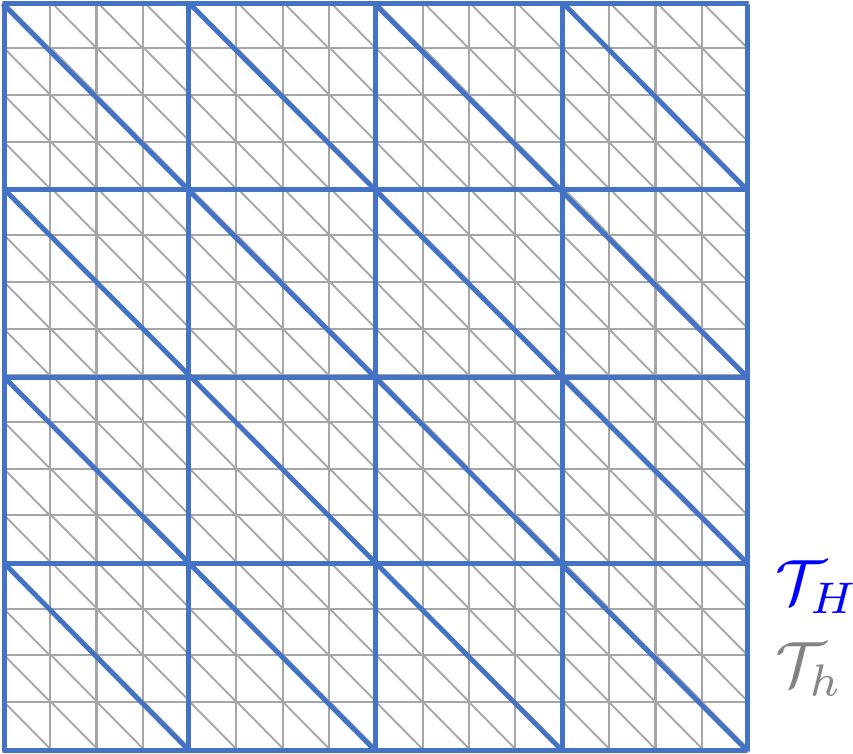}} \quad
  \subfloat[][]{\includegraphics[height=0.28\hsize]{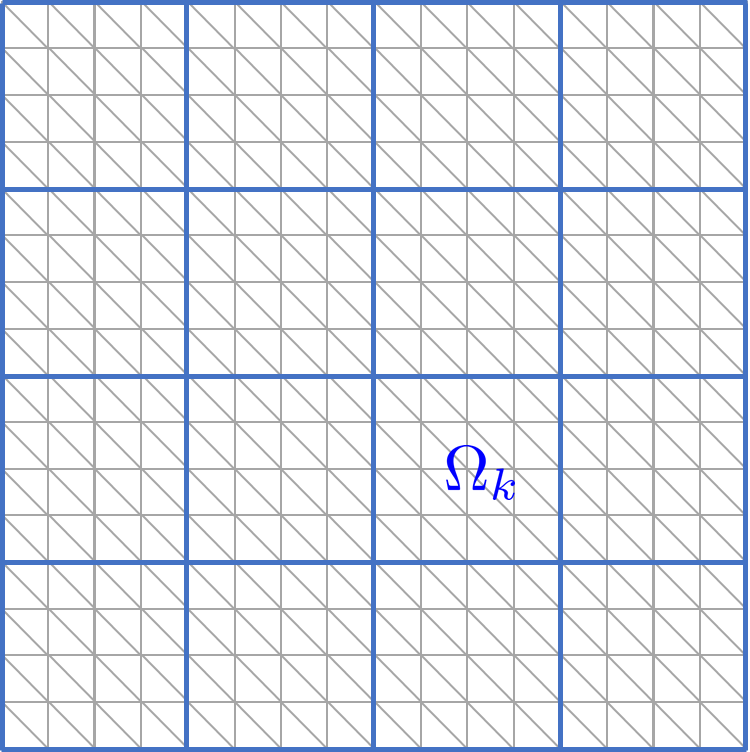}} \quad\quad
  \subfloat[][]{\includegraphics[height=0.28\hsize]{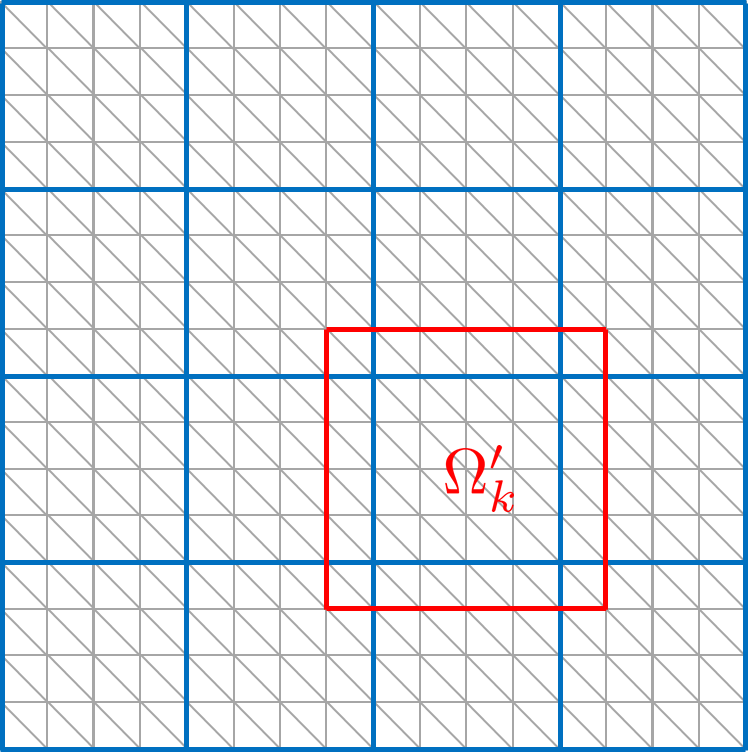}}
  \caption{Discretization and domain decomposition settings when $h = 1/2^4$, $H = 1/2^2$, and $\delta = h$.
  {\rm\bf (a)} Coarse triangulation $\cT_H$ and fine triangulation $\cT_h$.
  {\rm\bf (b)} Nonoverlapping domain decomposition $\{ \Omega_k \}_{k=1}^N$.
  {\rm\bf (c)} Overlapping domain decomposition $\{ \Omega_k ' \}_{k=1}^N$.}
  \label{Fig:DD}
\end{figure}

In the model $p$-Laplacian problem~\eqref{pLap_strong}, we set $p \in \{ 1.05, 1.1, 1.5, 5, 10, 20 \}$, $\Omega = [0,1]^2 \subset \mathbb{R}^2$, and $f = 1$.
The domain $\Omega$ is partitioned into $2 \times 1/H \times 1/H$ uniform triangles to form a coarse triangulation $\cT_H$ of $\Omega$.
We further refine $\cT_H$ to obtain a fine triangulation $\cT_h$, which consists of total $2 \times 1/h \times 1/h$ uniform triangles.
Each subdomain $\Omega_k$, $1 \leq k \leq N$~($N = 1/H \times 1/H$), is defined by a rectangular region consisting of two coarse triangles sharing a diagonal edge.
Then we extend $\Omega_k$ by adding its surrounding layers of fine triangles in $\cT_h$ with the width $\delta$ to construct $\Omega_k'$, so that $\{ \Omega_k' \}_{k=1}^N$ becomes an overlapping domain decomposition for $\Omega$.
If $\delta \in (0, H/2)$, then $\{ \Omega_k' \}_{k=1}^N$ can be colored with 4 colors in the way described in \cref{Lem:convex}.
The discretization and domain decomposition settings described above are illustrated in \cref{Fig:DD}.

In \cref{Alg:ASM}, we set $u^{(0)} = 0$ and $\tau = \tau_0 = 1/5$.
Local problems defined on $V_k$, $1 \leq k \leq N$, and coarse problems defined on $V_0$ are solved by the adaptive Newton method proposed in~\cite[Algorithm~2.1]{Mishchenko:2023}.
We use the stop criterion
\begin{equation*}
\left| \frac{F_k (w_k^{(n+1)}) - F_k (w_k^{(n)})}{F_k (w_k^{(n+1)})} \right| < 10^{-12}
\end{equation*}
for both local and coarse problems, where $F_k$ represents the energy functional corresponding to the local or coarse problems on $V_k$.

\begin{remark}
\label{Rem:First}
As alternatives to the adaptive Newton method used in this paper, which is a second-order optimization algorithm, first-order optimization algorithms~\cite{Teboulle:2018} can be adopted to solve the local and coarse problems.
These algorithms are generally easier to implement as they do not require the Hessian information of the energy functional but known to converge slower than second-order algorithms.
To accelerate the convergence rate of a first-order algorithm, several techniques such as the FISTA momentum~\cite{BT:2009}, restart scheme~\cite{OC:2015}, and backtracking~\cite{SGB:2014} can be employed.
\end{remark}

\begin{figure}
  \subfloat[][$p = 1.05$]{\includegraphics[width=0.31\hsize]{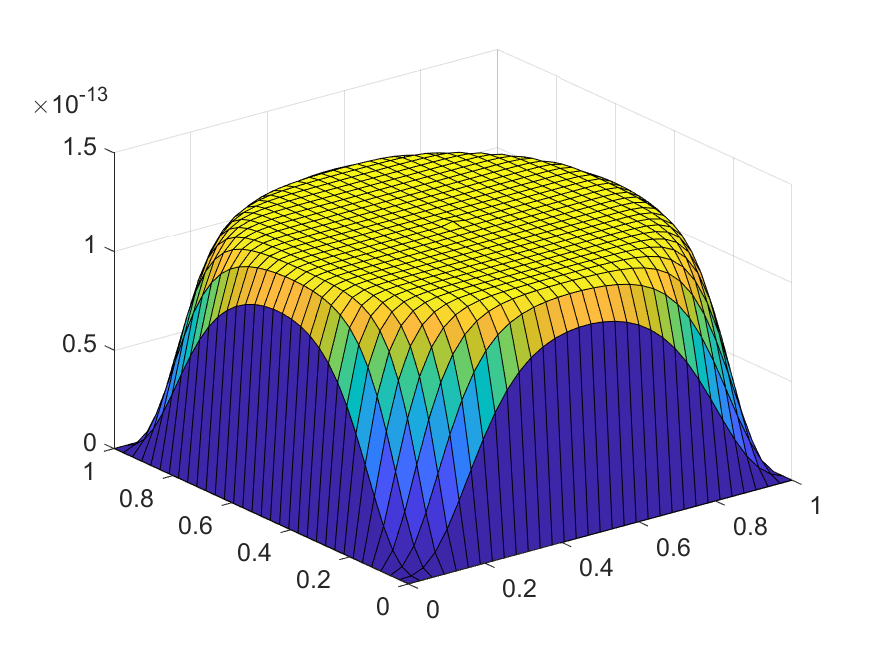}}
  \subfloat[][$p = 1.1$]{\includegraphics[width=0.31\hsize]{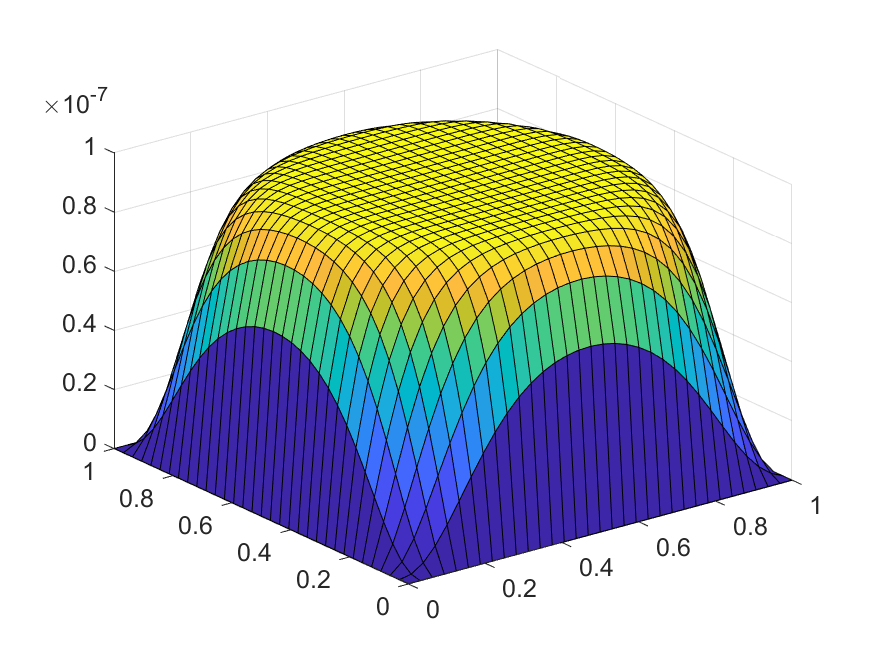}}
  \subfloat[][$p = 1.5$]{\includegraphics[width=0.31\hsize]{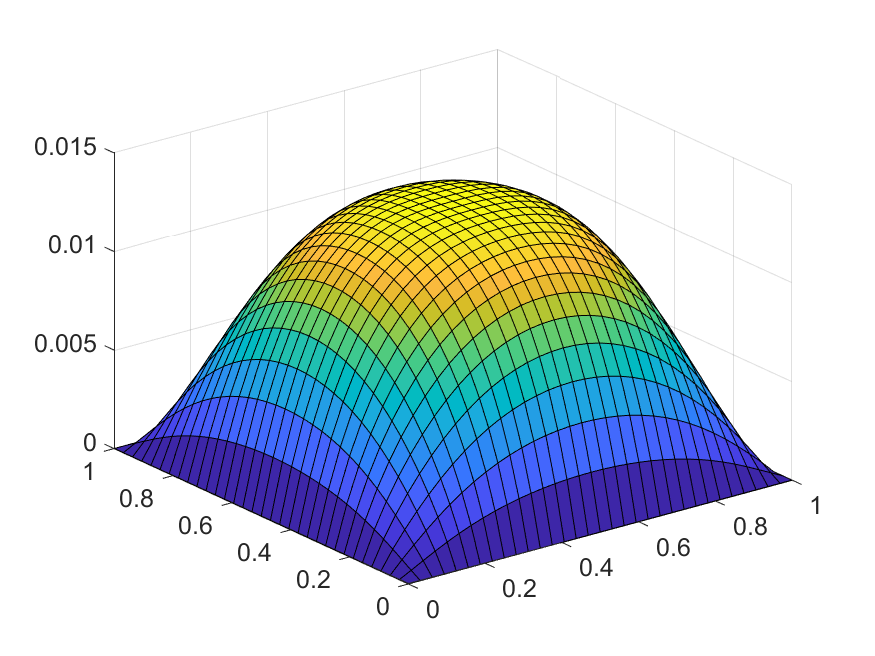}} \\
  
  \subfloat[][$p = 5$]{\includegraphics[width=0.31\hsize]{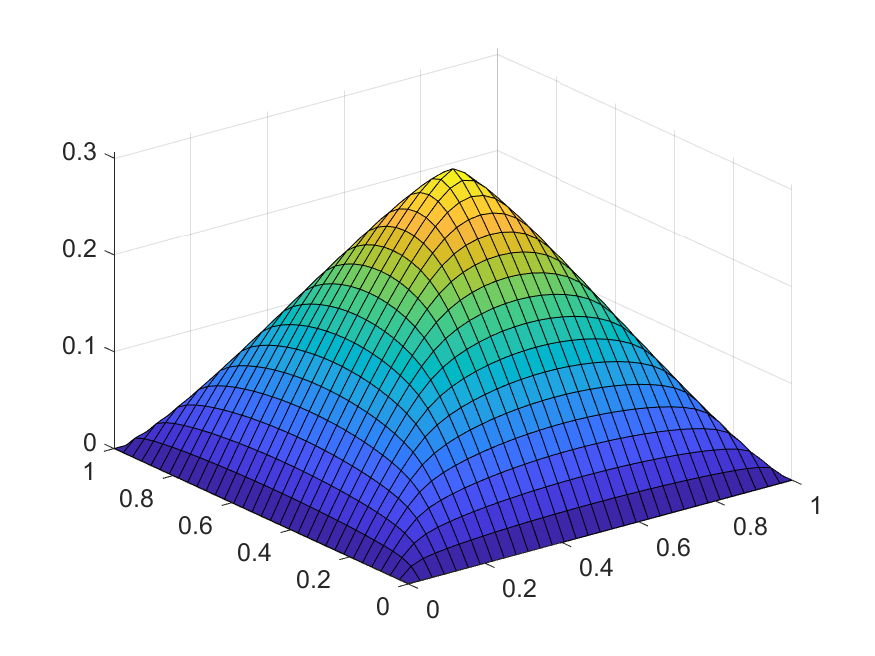}}
  \subfloat[][$p = 10$]{\includegraphics[width=0.31\hsize]{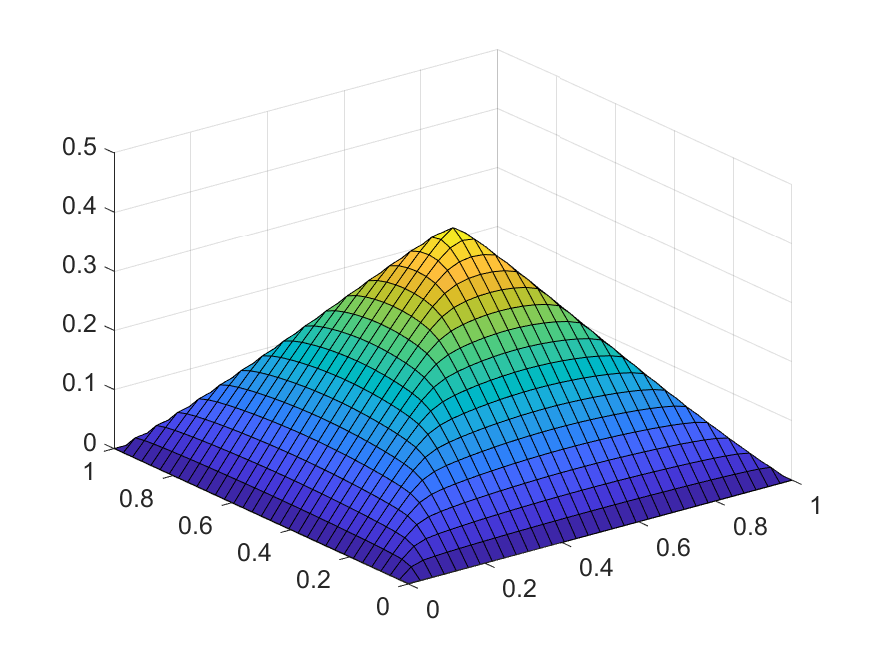}}
  \subfloat[][$p = 20$]{\includegraphics[width=0.31\hsize]{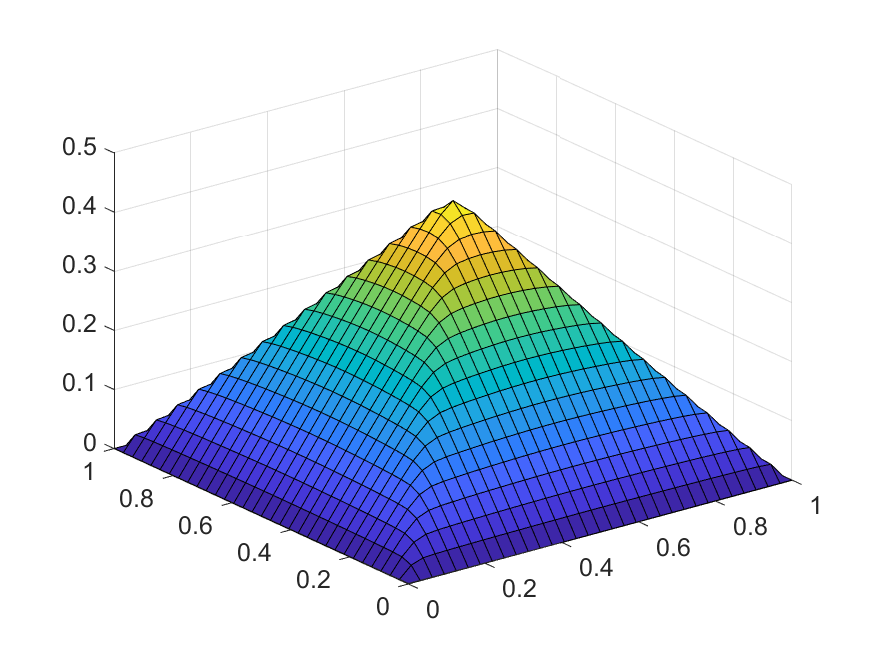}}
  \caption{Reference solutions of the $p$-Laplacian problem~\eqref{pLap_FEM}~($p \in \{ 1.05, 1.1, 1.5, 5, 10, 20 \}$) computed by the adaptive Newton method~\cite{Mishchenko:2023}~($h = 2^{-5}$).}
  \label{Fig:sol}
\end{figure}

A reference solution $u^* \in V$ for each $p$ and $h$ is computed by sufficiently many iterations of the adaptive Newton method applied to the full-dimension problem~\eqref{pLap_FEM}.
The computed reference solutions for $p \in \{ 1.05, 1.1, 1.5, 5, 10, 20 \}$ are plotted in \cref{Fig:sol}.
One can observe that for cases where $p$ is close to $1$, the reference solutions exhibit flat regions where the gradient vanishes.
This observation implies that when $p$ is close to $1$, the assumption in \cref{Thm:linear} that $\nabla u^*$ does not vanish may not hold.
On the other hand, for cases where $p$ is large, the reference solutions display peaks, leading to singular behavior in the solution.

\begin{figure}
  \subfloat[][$\delta = 2^0h$]{\includegraphics[width=0.31\hsize]{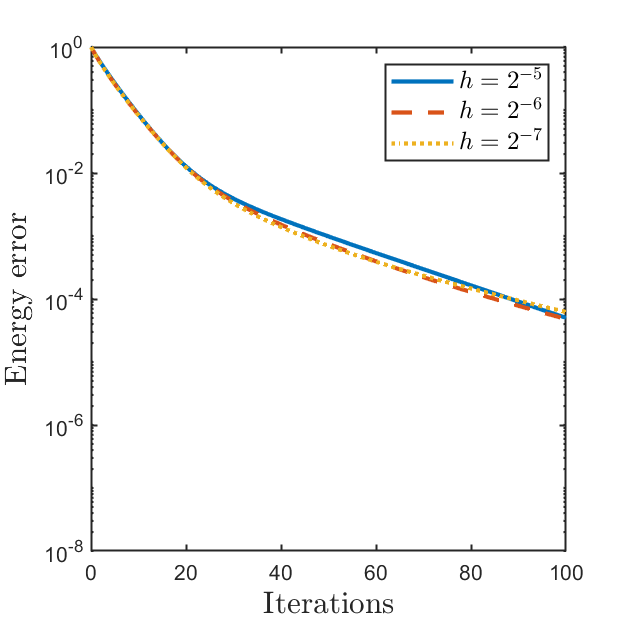}}
  \subfloat[][$\delta = 2^1h$]{\includegraphics[width=0.31\hsize]{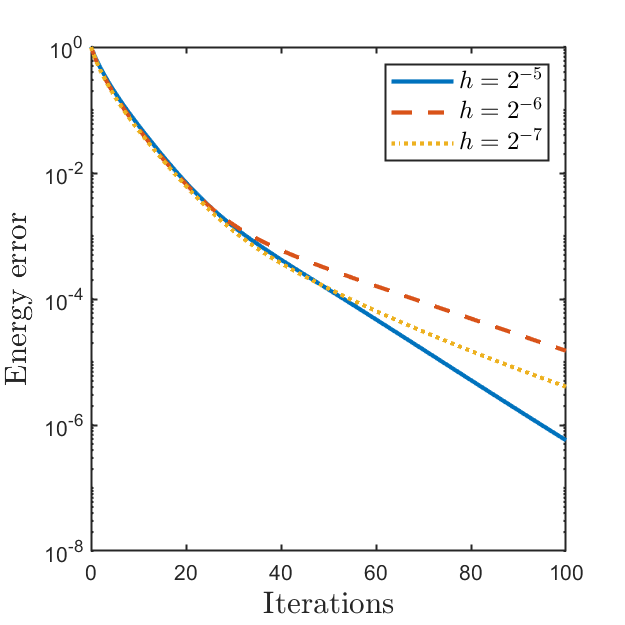}}
  \subfloat[][$\delta = 2^2h$]{\includegraphics[width=0.31\hsize]{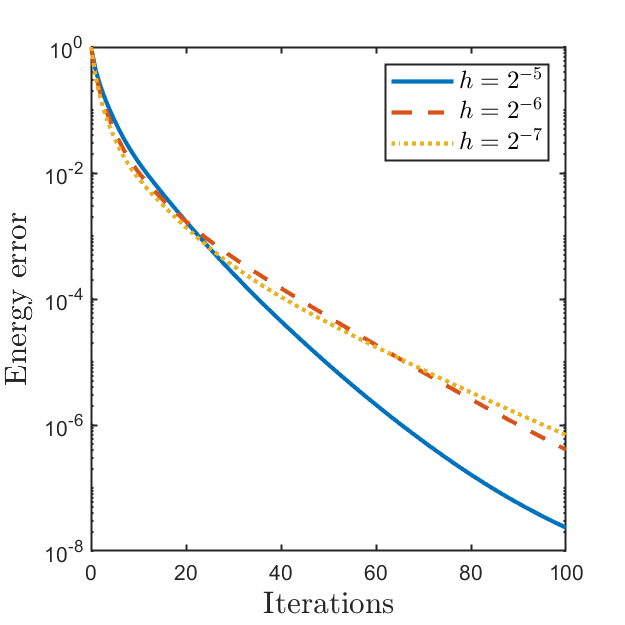}}
  \caption{Decay of the relative energy error~\eqref{rel_error} in the two-level additive Schwarz method~(\cref{Alg:ASM}) for the $p$-Laplacian problem~\eqref{pLap_FEM}~($p = 1.05$).
  Parameters $h$, $H$, and $\delta$ stand for the characteristic element size, subdomain size, and overlapping width among subdomains, respectively~($H/h = 2^3$).}
  \label{Fig:1.05}
\end{figure}

\begin{figure}
  \subfloat[][$\delta = 2^0h$]{\includegraphics[width=0.31\hsize]{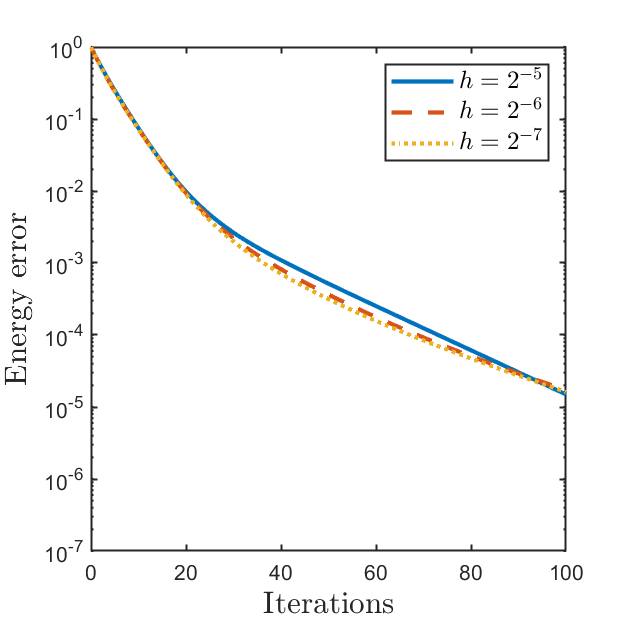}}
  \subfloat[][$\delta = 2^1h$]{\includegraphics[width=0.31\hsize]{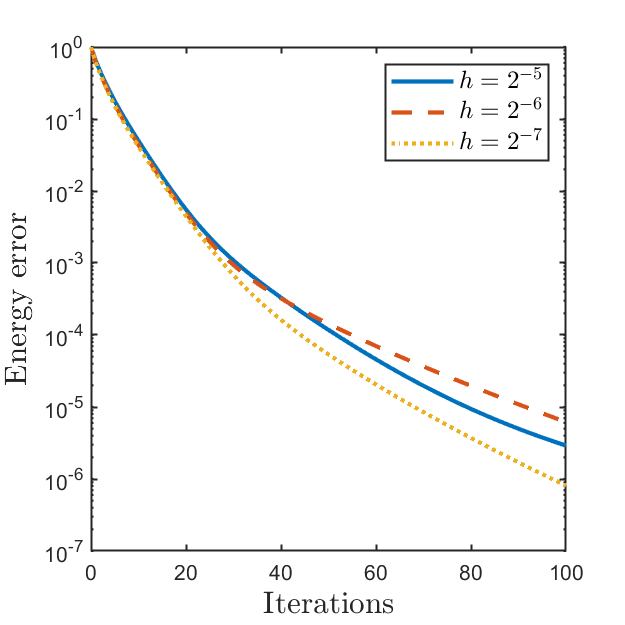}}
  \subfloat[][$\delta = 2^2h$]{\includegraphics[width=0.31\hsize]{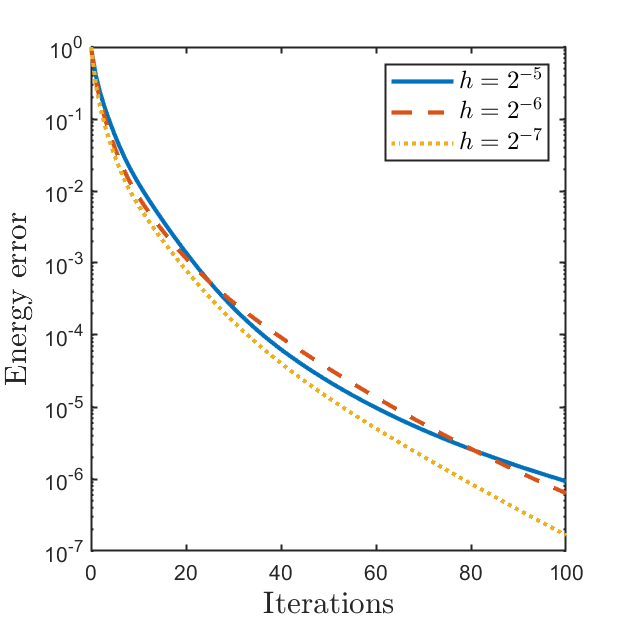}}
  \caption{Decay of the relative energy error~\eqref{rel_error} in the two-level additive Schwarz method~(\cref{Alg:ASM}) for the $p$-Laplacian problem~\eqref{pLap_FEM}~($p = 1.1$).
  Parameters $h$, $H$, and $\delta$ stand for the characteristic element size, subdomain size, and overlapping width among subdomains, respectively~($H/h = 2^3$).}
  \label{Fig:1.10}
\end{figure}

\begin{figure}
  \subfloat[][$\delta = 2^0h$]{\includegraphics[width=0.31\hsize]{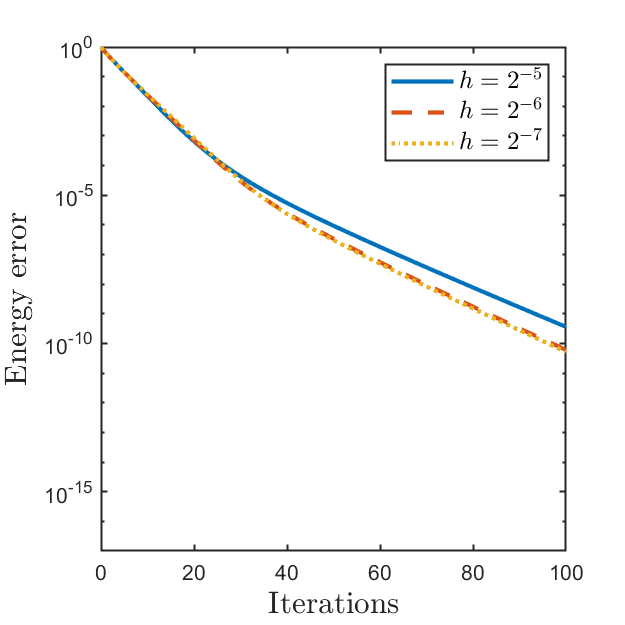}}
  \subfloat[][$\delta = 2^1h$]{\includegraphics[width=0.31\hsize]{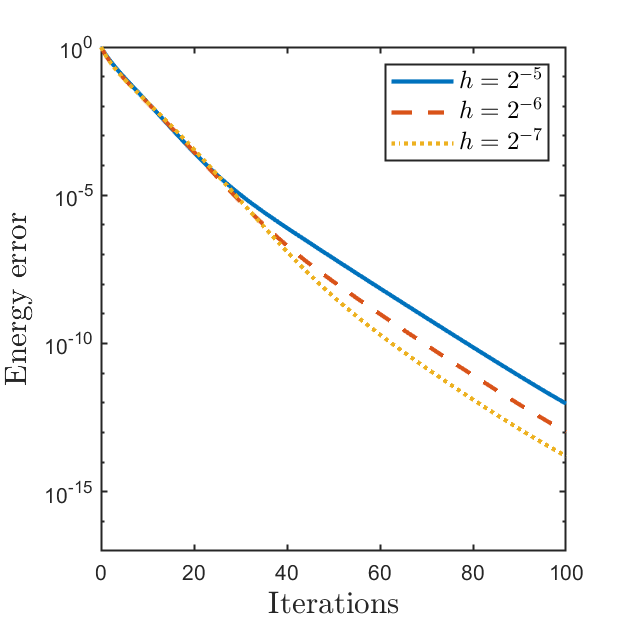}}
  \subfloat[][$\delta = 2^2h$]{\includegraphics[width=0.31\hsize]{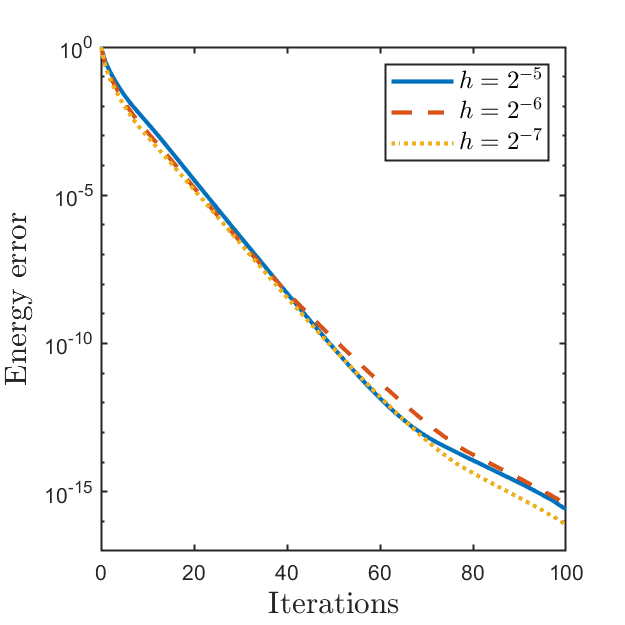}}
  \caption{Decay of the relative energy error~\eqref{rel_error} in the two-level additive Schwarz method~(\cref{Alg:ASM}) for the $p$-Laplacian problem~\eqref{pLap_FEM}~($p = 1.5$).
  Parameters $h$, $H$, and $\delta$ stand for the characteristic element size, subdomain size, and overlapping width among subdomains, respectively~($H/h = 2^3$).}
  \label{Fig:1.50}
\end{figure}

\begin{figure}
  \subfloat[][$\delta = 2^0h$]{\includegraphics[width=0.31\hsize]{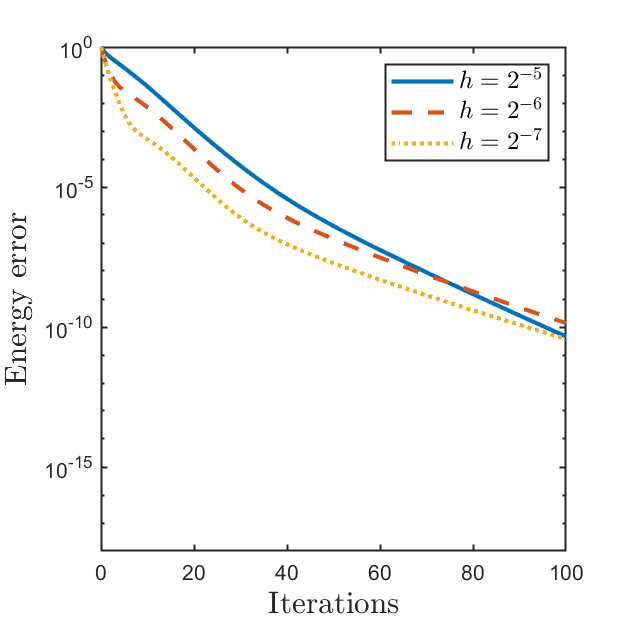}}
  \subfloat[][$\delta = 2^1h$]{\includegraphics[width=0.31\hsize]{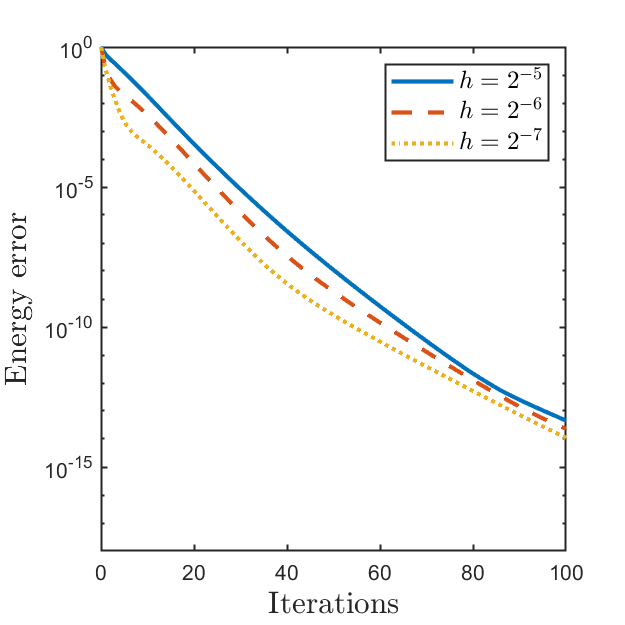}}
  \subfloat[][$\delta = 2^2h$]{\includegraphics[width=0.31\hsize]{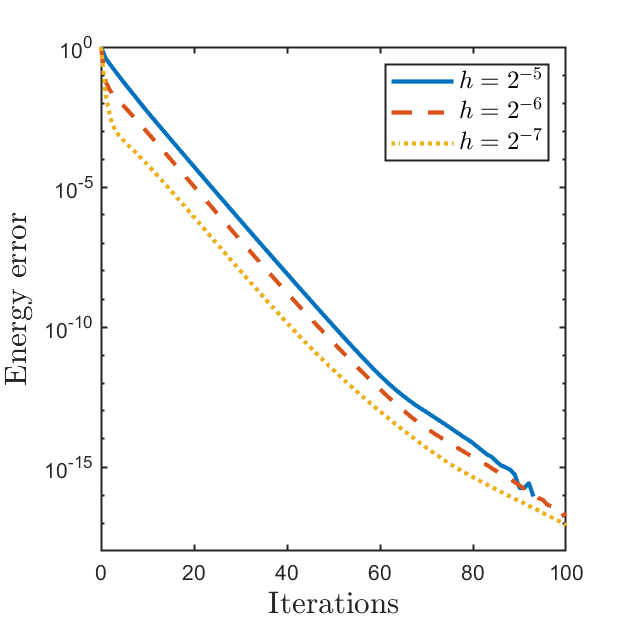}}
  \caption{Decay of the relative energy error~\eqref{rel_error} in the two-level additive Schwarz method~(\cref{Alg:ASM}) for the $p$-Laplacian problem~\eqref{pLap_FEM}~($p = 5$).
  Parameters $h$, $H$, and $\delta$ stand for the characteristic element size, subdomain size, and overlapping width among subdomains, respectively~($H/h = 2^3$).}
  \label{Fig:5.00}
\end{figure}

\begin{figure}
  \subfloat[][$\delta = 2^0h$]{\includegraphics[width=0.31\hsize]{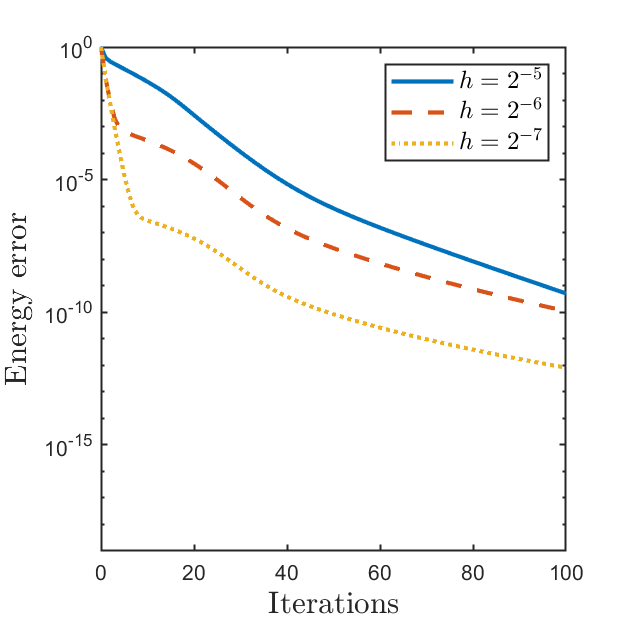}}
  \subfloat[][$\delta = 2^1h$]{\includegraphics[width=0.31\hsize]{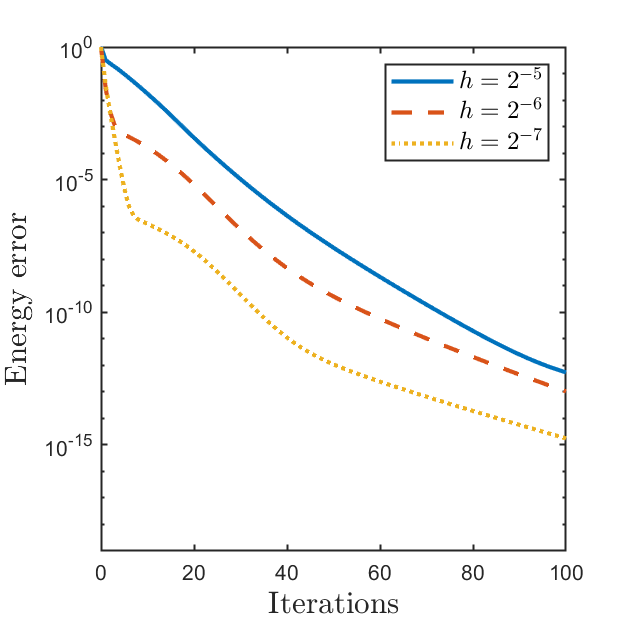}}
  \subfloat[][$\delta = 2^2h$]{\includegraphics[width=0.31\hsize]{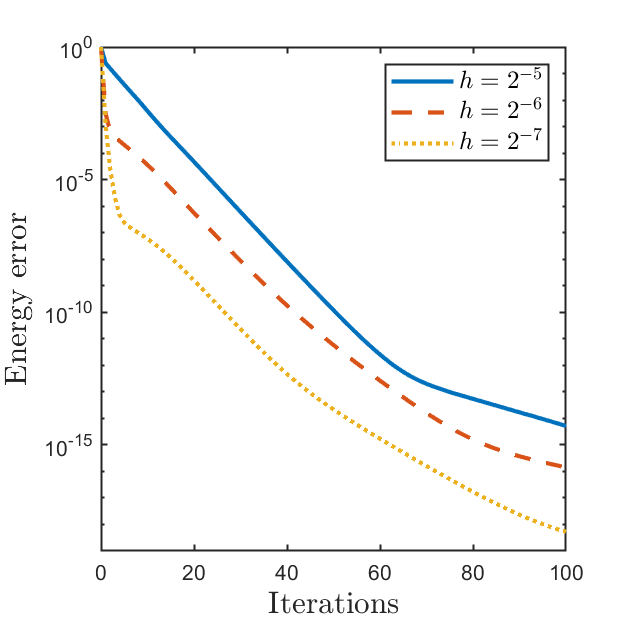}}
  \caption{Decay of the relative energy error~\eqref{rel_error} in the two-level additive Schwarz method~(\cref{Alg:ASM}) for the $p$-Laplacian problem~\eqref{pLap_FEM}~($p = 10$).
  Parameters $h$, $H$, and $\delta$ stand for the characteristic element size, subdomain size, and overlapping width among subdomains, respectively~($H/h = 2^3$).}
  \label{Fig:10.00}
\end{figure}

\begin{figure}
  \subfloat[][$\delta = 2^0h$]{\includegraphics[width=0.31\hsize]{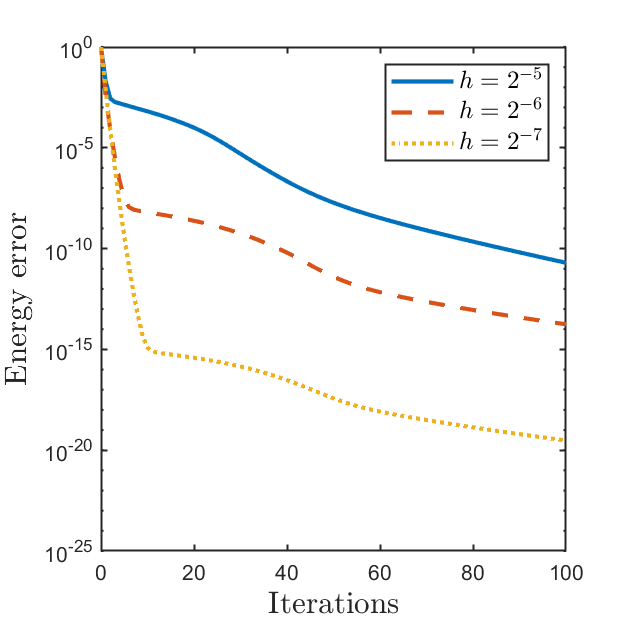}}
  \subfloat[][$\delta = 2^1h$]{\includegraphics[width=0.31\hsize]{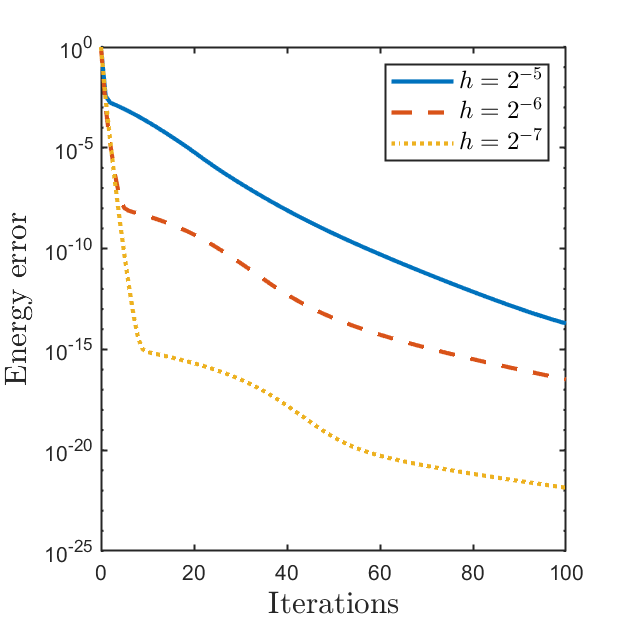}}
  \subfloat[][$\delta = 2^2h$]{\includegraphics[width=0.31\hsize]{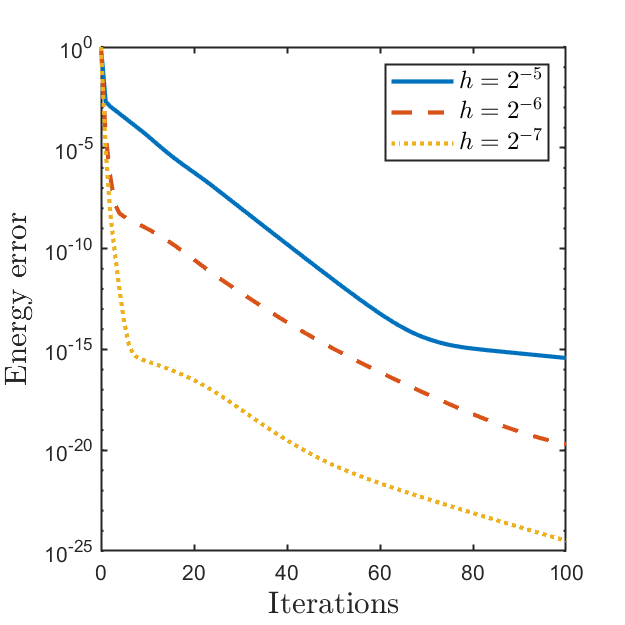}}
  \caption{Decay of the relative energy error~\eqref{rel_error} in the two-level additive Schwarz method~(\cref{Alg:ASM}) for the $p$-Laplacian problem~\eqref{pLap_FEM}~($p = 20$).
  Parameters $h$, $H$, and $\delta$ stand for the characteristic element size, subdomain size, and overlapping width among subdomains, respectively~($H/h = 2^3$).}
  \label{Fig:20.00}
\end{figure}

In \cref{Fig:1.05,Fig:1.10,Fig:1.50,Fig:5.00,Fig:10.00,Fig:20.00}, we depict the relative energy errors
\begin{equation}
\label{rel_error}
\frac{F(u^{(n)}) - F(u^*)}{ F(u^{(0)}) - F(u^*)}
\end{equation}
of \cref{Alg:ASM} under various settings on $p$, $h$, $H$, and $\delta$.
More precisely, in \cref{Fig:1.50,Fig:5.00}, we choose $p$ as moderate values $1.5$ and $5$, while in \cref{Fig:1.05,Fig:1.10}, $p$ is chosen very close to $1$~($p = 1.05, 1.1$), and in \cref{Fig:10.00,Fig:20.00}, $p$ is chosen large~($p = 10, 20$).
In all figures, $h$ and $H$ vary such that $H/h = 2^3$, and $\delta$ is chosen as $\delta \in \{ 2^0h, 2^1h, 2^2 h \}$.

In every case, we observe that the convergence curve of the relative energy error with respect to the number of iterations $n$ appears linear in the $x$-linear $y$-log scale plot when $n$ is large enough, consistent with our theoretical result presented in \cref{Thm:linear}.
It is noteworthy that even in cases where $p$ is very close to $1$~(see \cref{Fig:1.05,Fig:1.10}), where \cref{Thm:linear} cannot be applied due to the flat region in the solution $u^*$ as shown in \cref{Fig:sol}(a, b), the convergence curve still appears linear.
However, a theoretical explanation for the linear convergence in these cases is currently lacking.

On the other hand, for each $p$, we observe that the asymptotic convergence rate of \cref{Alg:ASM} shown in \cref{Fig:1.05,Fig:1.10,Fig:1.50,Fig:5.00,Fig:10.00,Fig:20.00} remains bounded when $h$ decreases keeping $H / \delta$ constant.
This behavior aligns with the dependence of $\gamma$ to $H/ \delta$ explained in \cref{Thm:linear}.
Moreover, this observation implies that \cref{Alg:ASM} is numerically scalable; the asymptotic linear convergence rate is uniformly bounded when the ratio of the subdomain size to the overlapping width is fixed.

\section{Conclusion}
\label{Sec:Conclusion}
In this paper, we developed a new convergence theory for additive Schwarz methods for boundary value problems involving the $p$-Laplacian.
To the best of our knowledge, our theory is the first theoretical result that explains the asymptotic linear convergence of additive Schwarz methods for the $p$-Laplacian.
Our work successfully bridges the gap between theory and practice by demonstrating that our theoretical findings align well with numerical results.

While the convergence theory of subspace correction methods for linear problems appears to be well-developed~\cite{LWXZ:2008,XZ:2002}, there remains a need for further research on the theory of subspace correction methods for nonlinear problems.
We believe that our result can serve as a foundation for the sharp convergence theory of general subspace correction methods for complex nonlinear problems.

\bibliographystyle{siamplain}
\bibliography{refs_Schwarz_pLap}

\end{document}